\let\c@table\c@figure
\newtheorem{theorem}{Theorem}
\newtheorem{lemma}[theorem]{Lemma}
\newtheorem{corollary}[theorem]{Corollary}
\theoremstyle{definition}
\newtheorem{definition}[theorem]{Definition} 
\newtheorem{construction}[theorem]{Construction}
\newcommand{\EOC}{\hfill $\diamond$}
\newcommand{\C}{\mathrm C}
\newcommand{\D}{\mathrm D}
\newcommand{\V}{\mathrm V}
\newcommand{\Cos}{\mathrm{Cos}}
\newcommand{\Aut}{\mathrm{Aut}}
\newcommand{\Cay}{\mathop{\mathrm{Cay}}}
\newcommand{\Merge}{\mathrm {M}}
\newcommand{\Split}{\mathrm {S}}
\newcommand{\cC}{\mathcal {C}}
\newcommand{\cP}{\mathcal {P}}
\newcommand{\cR}{\mathcal {R}}
\newcommand{\cT}{\mathcal {T}}
\newcommand{\ZZ}{\mathbb Z}
\newcommand{\la}{\langle}
\newcommand{\ra}{\rangle}
\begin{document}

\title{Cubic vertex-transitive graphs on up to $1280$ vertices}

\author[P. Poto\v{c}nik]{Primo\v{z} Poto\v{c}nik}
\address{Primo\v{z} Poto\v{c}nik,\newline
 Faculty of Mathematics and Physics,
 University of Ljubljana, \newline 
Jadranska 19, 1000 Ljubljana, Slovenia}\email{primoz.potocnik@fmf.uni-lj.si}

\author[P. Spiga]{Pablo Spiga}
\address{Pablo Spiga,\newline
 University of Milano-Bicocca, Departimento di Matematica Pura e Applicata, \newline
 Via Cozzi 53, 20126 Milano Italy} \email{pablo.spiga@unimib.it}

\author[G. Verret]{Gabriel Verret}
\address{Gabriel Verret,\newline
Faculty of Mathematics, Nat. Sci. and Info. Tech., University of Primorska, \newline 
Glagolja\v{s}ka 8, 6000 Koper, Slovenia}
\email{gabriel.verret@pint.upr.si}

\thanks{Address correspondence to P. Spiga (pablo.spiga@unimib.it)}

\subjclass[2000]{20B25}
\keywords{cubic, tetravalent, valency $3$, valency $4$, vertex-transitive, arc-transitive} 

\thanks{The authors would like to thank Simon Guest for technical advice and for hosting some of the computations and Marston Guest for providing his list of small regular maps.}

\begin{abstract}
A graph is called \emph{cubic} and \emph{tetravalent} if all of its vertices have valency $3$ and $4$, respectively. It is called \emph{vertex-transitive} and \emph{arc-transitive} if its automorphism group acts transitively on its vertex-set and on its arc-set, respectively. In this paper, we combine some new theoretical results with computer calculations to construct all cubic vertex-transitive graphs of order at most $1280$. In the process, we also construct all tetravalent arc-transitive graphs of order at most $640$.
\end{abstract}

\maketitle

\section{Introduction}\label{sec:Intro}
Throughout this paper, all graphs considered will be finite and simple (undirected, loopless and with no multiple edges). A graph is called \emph{vertex-transitive} if its automorphism group acts transitively on its vertex-set. The family of vertex-transitive graphs has been the subject of much research and there are still many important questions concerning it that are still open and active. Since a disconnected vertex-transitive graph consists of pairwise isomorphic connected components, there is little loss of generality in assuming connectedness, which we will do throughout the paper.

It is easy to see that a vertex-transitive graph must be \emph{regular}, that is, all of its vertices must have the same valency. If this valency is at most $2$ and the graph has order at least $3$, then the graph must be a cycle. 

In this sense, the first non-trivial case is that of \emph{cubic} graphs, that is, regular graphs of valency 3. Hence, many questions about vertex-transitive graphs are first considered in the cubic case (see~\cite{CSS,Glover,KMZ, Li} for example). Some of these questions are still very hard even in the cubic case, at least given our current understanding of cubic vertex-transitive graphs.

Many authors have tried to make some headway by constructing all cubic vertex-transitive graphs of certain type, for example those with order admitting a particularly simple factorisation. Another idea is to determine all cubic vertex-transitive graphs up to a certain order. For example, Read and Wilson enumerated~\cite[pp. 161-163]{ReadWilson} all such graphs of order at most $34$ and McKay and Royle have constructed a table~\cite{McKayRoyle} of cubic vertex-transitive graphs of small order which is complete up to order $94$. To the best of our knowledge, these were the best results available of this kind up to now. In this paper, we construct a census of all cubic vertex-transitive graphs of order at most $1280$. We find that there are $111360$ non-isomorphic such graphs; obviously a list cannot be given in this paper but we present some overall data in Section~\ref{sec:data}. A complete list of these graphs in \texttt{Magma}~\cite{magma} code can be found online at~\cite{Census3}. We now give an overview of the methods we employed but we first need to fix some more basic terminology and discuss a few basic results. 

A graph $\Gamma$ is said to be $G$-\emph{vertex-transitive} if $G$ is a subgroup of $\Aut(\Gamma)$ acting transitively on the vertex-set $\V\Gamma$ of $\Gamma$. Similarly, $\Gamma$ is said to be $G$-\emph{arc-transitive} if $G$ acts transitively on the arcs of $\Gamma$ (that is, on the ordered pairs of adjacent vertices of $\Gamma$). When $G=\Aut(\Gamma)$, the prefix $G$ in the above notation is sometimes omitted.

Let $\Gamma$ be a cubic $G$-vertex-transitive graph, let $v$ be a vertex of $\Gamma$ and let $m$ be the number of orbits of the vertex-stabiliser $G_v$ in its action on the neighbourhood $\Gamma(v)$. It is an easy observation that, since $\Gamma$ is $G$-vertex-transitive, $m$ is equal to the number of orbits of $G$ in its action on the arcs of $\Gamma$ (and, in particular, does not depend on the choice of $v$). Since $\Gamma$ is cubic, it follows that $m\in\{1,2,3\}$ and there is a natural split into three cases, according to the value of $m$. This split into three cases was considered already in~\cite{Zero, Lorimer} when $G=\Aut(\Gamma)$.

If $m=1$, then $\Gamma$ is $G$-arc-transitive. This case is by far the easiest to deal with. The so-called Foster Census, an extensive list of examples of small order, was started as early as 1932, has been much extended since and is now known to be complete up to at least order $2048$ (see~\cite{FosterBouwer, Conder2048, ConderFosterCensus, Foster}). The arc-transitive case was thus already dealt with but we nevertheless briefly explain the method used and why it cannot be immediately applied to the general vertex-transitive case. The method used in the arc-transitive case relies on a celebrated theorem of Tutte~\cite{Tutte,Tutte2} which shows that, if $\Gamma$ is a cubic $G$-arc-transitive graph, then the vertex-stabiliser $G_v$ has order at most $48$ and hence $|G|\leq 48|\V(\Gamma)|$. Since the order of the groups involved grows at most linearly with the order of the graphs and the groups have a particular structure~\cite{DjoMi}, a computer algebra system can find all the graphs up to a certain order rather efficiently (by using the \texttt{LowIndexNormalSubgroups} algorithm in \texttt{Magma} for example).

If $m=3$, then $G_v$ fixes the neighbours of $v$ pointwise and, by connectedness, it is easily seen that $G_v=1$. This lack of structure of the vertex-stabiliser makes it difficult to use the  method that was successfully used in the arc-transitive case. On the other hand, since $G_v=1$, it follows that $|G|=|\V(\Gamma)|\leq 1280$. This allows us to use the \texttt{SmallGroups} database in \texttt{Magma} to find all possibilities for $G$ (and then for $\Gamma$). There are some tricks involved which help remarkably in restricting the search space and making the computation feasible; the details are in Section~\ref{sec:Cayley}. 

The final and probably hardest case is when $m=2$. A recent attempt~\cite{MW} at classifying the small graphs in this case was only done in a special case and only up to order $150$. The main difficulty is that the vertex-stabiliser can have arbitrarily large order; for every $n\geq 5$, there exists a cubic vertex-transitive graph of order $4n$ and with vertex-stabiliser of order $2^{n-1}$. In Section~\ref{LocallyC2} we show that, in this case, we can construct an auxiliary graph which is tetravalent, $G$-arc-transitive and has half the order of $\Gamma$. Moreover, we show that this construction can be reversed. Therefore, in order to find all cubic $G$-vertex-transitive graphs with $m=2$ up to $n$ vertices, it suffices to construct the list of all tetravalent arc-transitive graphs of order at most $n/2$. Recently, the authors of this paper have proved~\cite{PSV4valent} that, apart from a family of well-understood exceptions, the order of the automorphism group of a tetravalent arc-transitive graph is bounded above by a quadratic function of the order of the graph. This allows us to use a method similar to the one used in the cubic arc-transitive case to construct a list of all tetravalent arc-transitive graphs of order at most $640$; the details are in Section~\ref{4valent}.

\section{Data about the graphs}\label{sec:data}

There are too many graphs in our census to discuss them individually in this paper. The complete list of graphs in \texttt{Magma} code can be found online~\cite{Census3}, while here we simply present some aggregate data about them and give an overall impression. 

Figure~\ref{graph:total} shows the number of cubic vertex-transitive graphs of order at most $n$ with respect to $n$ (these are the black data points). Superimposed on this data (in gray) is the graph of the function $n \mapsto n^2/15$, which seems to be a close approximation on the range considered. We do not expect this trend to continue. In fact, in an upcoming paper~\cite{Asymptotic}, we prove that if $f(n)$ is the number of cubic vertex-transitive graphs of order at most $n$, then $\log(f(n))\in\Theta((\log n)^2)$. In other words, there exist positive constants $c_1$ and $c_2$ such that, for every large enough $n$, $$c_1(\log n)^2\leq\log(f(n))\leq c_2(\log n)^2.$$

\begin{center}
\begin{figure}[h]
\includegraphics[scale=0.80]{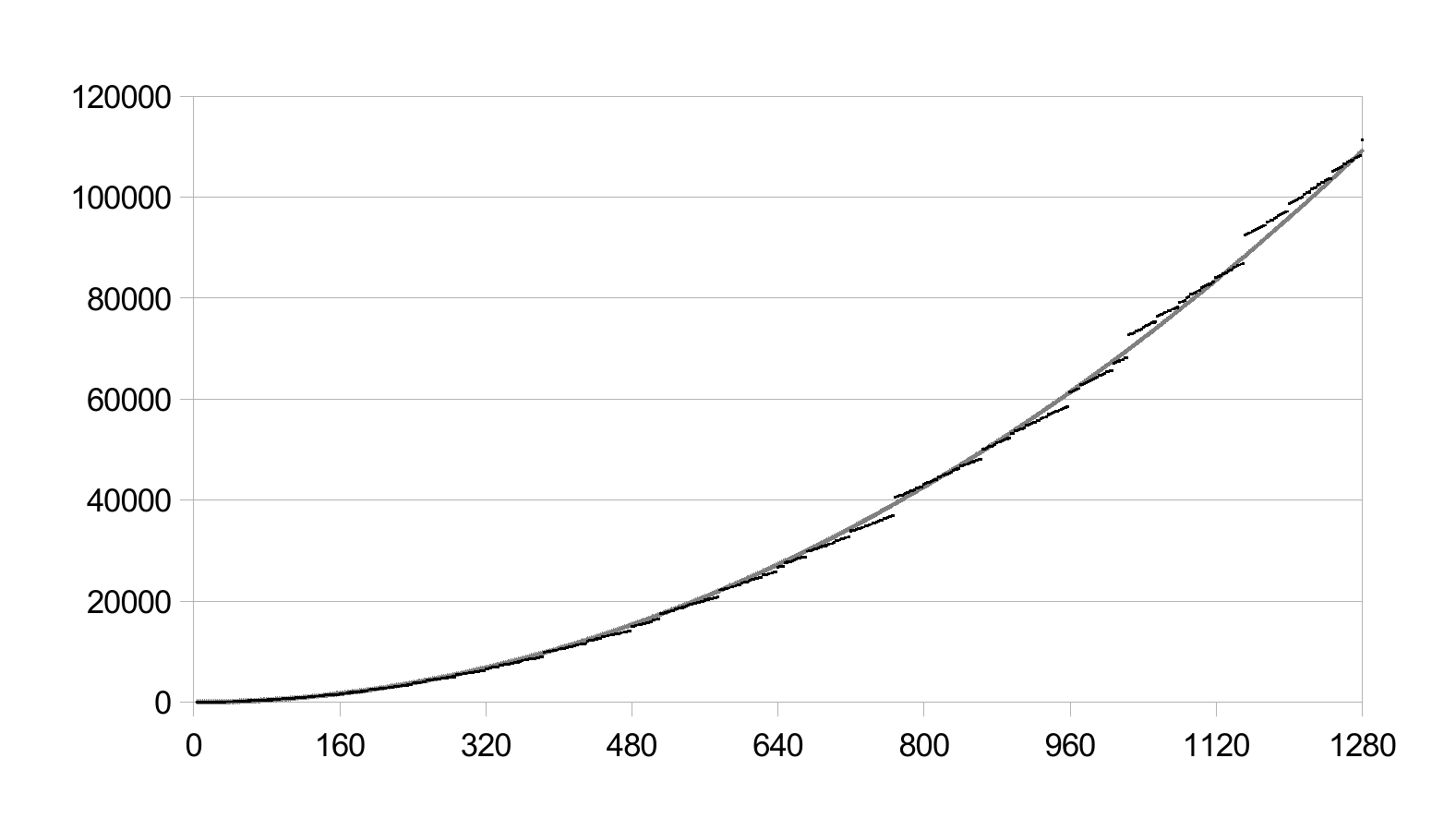}
\caption{\small{Number of cubic vertex-transitive graphs of order at most $n$.}}
\label{graph:total}
\end{figure}
\end{center}

Also note that, while the data points in Figure~\ref{graph:total} can be approximated rather well by a smooth function, some larger jumps can be distinctly seen where $n$ has lots of small prime factors (for example, at $768$, $1024$ and $1152$).

In Table~\ref{table:cubic}, we give the number of cubic vertex-transitive graphs of order at most $1280$, discriminating with respect to whether the graphs are Cayley or not and with respect to the parameter $m$, the number of arc-orbits of the automorphism group. Thus, the column labelled ``$m=1$'' corresponds to arc-transitive graphs, while the column labelled ``$m=3$'' corresponds to graphs the automorphism groups of which act regularly on their vertices (such graphs are sometimes called \emph{graphical regular representations}, or GRRs, and are necessarily Cayley, as noted in the introduction).

\begin{table}[h]
\begin{center}
\begin{tabular}{|c|c|c|c|c|c|}\hline
 & $m=1$ & $m=2$ & $m=3$ & Total \\\hline
Cayley &386  &11853&97687& 109926\\
Non-Cayley &96  &1338&0& 1434\\
Total & 482 &13191&97687& 111360\\\hline
\end{tabular}
\medskip
\caption{\small{Number of cubic vertex-transitive graphs of order at most $1280$.}}\label{table:cubic}
\end{center}
\end{table}

As can be seen in Table~\ref{table:cubic}, the vast majority of the graphs in this range are Cayley and, among those, the majority are GRRs. These two facts seem to be part of a trend, as the next figure shows.

\begin{center}
\begin{figure}[h]
\includegraphics[scale=0.80]{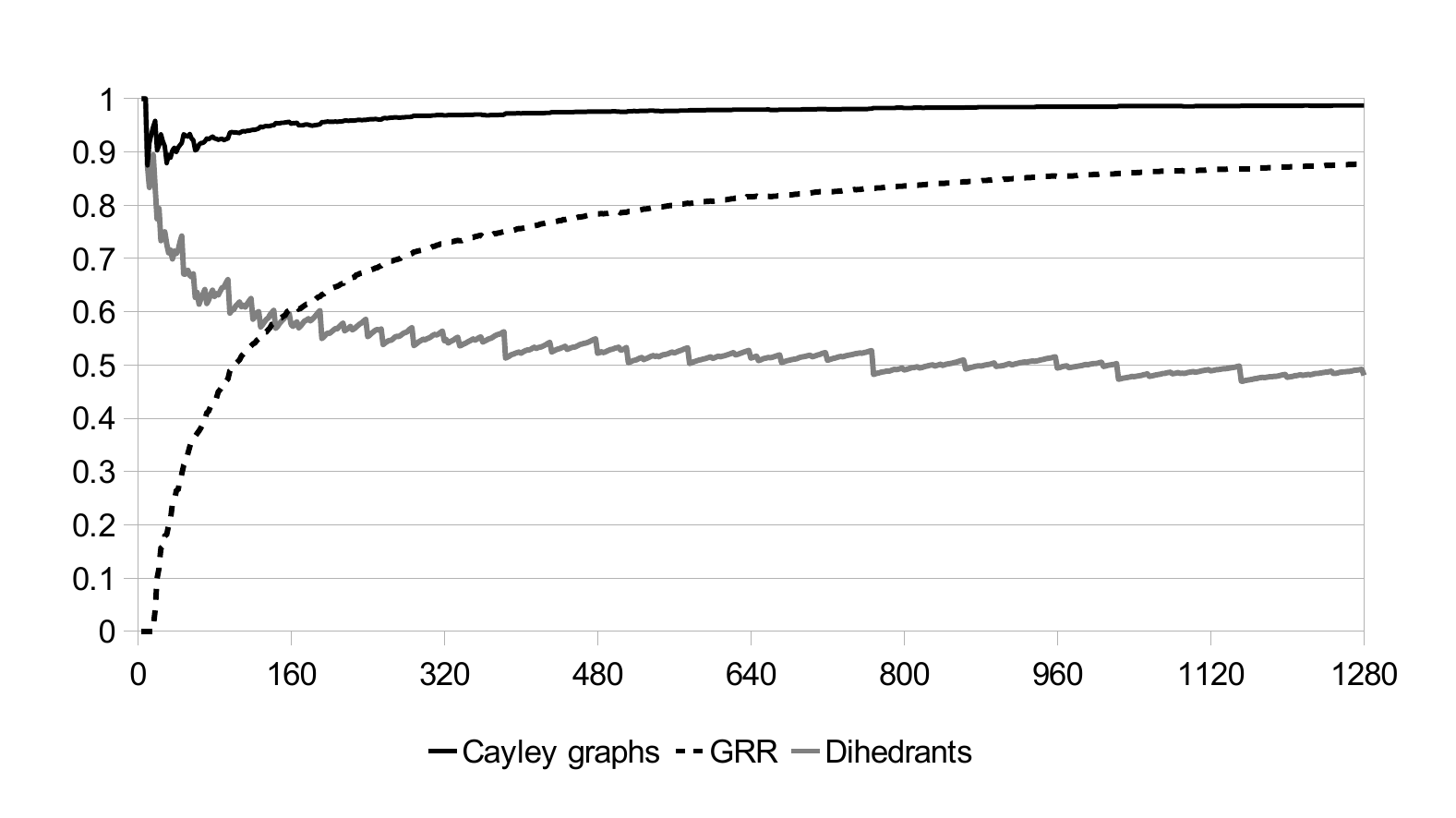}
\caption{\small{Proportion of graphs of different types of order at most $n$.}}
\label{graph:Percent}
\end{figure}
\end{center}

Figure~\ref{graph:Percent} shows the proportion of Cayley graphs, GRRs and dihedrants among all the cubic vertex-transitive graphs of order at most $n$ as a function of $n$ (a \emph{dihedrant} is a Cayley graph on a dihedral group). 

The proportion of GRRs seems to tend towards $1$. It is conjectured that almost all vertex-transitive graphs are Cayley graphs~\cite{McKayPraeger} and that almost all Cayley graphs are GRRs (see~\cite{BabaiGodsil} for a variant of this conjecture). Our data might suggest that these conjectures are also true when we restrict our attention to cubic graphs. 

It is also interesting that the proportion of dihedrants is initially very large and seems to stabilise at about one-half near the end of the range under consideration. However, this proportion actually tends to $0$ since the number of cubic dihedrants of order at most $n$ can grow at most polynomially with $n$ while the total number of vertex-transitive graphs of order at most $n$ grows faster than any polynomial function of $n$(this will be proved in~\cite{Asymptotic}).

We tested all the graphs in the census for Hamilton cycles and found at least one such cycle for each graph apart from the four well-known exceptions (the Petersen graph, the Coxeter graph and their truncations).

We also computed the diameter and girth of each of these graphs to see if they would yield new extremal examples to either the degree diameter problem (see~\cite{LozSiran,MillSiran}) or the cage problem (see~\cite{ExooJaj}). Let $n_{cay}(k,g)$ ($n_{vt}(k,g)$, respectively) denote the order of the smallest Cayley (vertex-transitive, respectively) graph of valency $k$ and girth $g$ and let $m_{cay}(k,d)$ ($m_{vt}(k,d)$, respectively) denote the order of the largest Cayley (vertex-transitive, respectively) graph of valency $k$ and diameter $d$. 

Simply by going through all the graphs in our census and computing their girth, one can see that the values of $n_{cay}(3,g)$ and  $n_{vt}(3,g)$ for $g\le 16$ are as shown in Table~\ref{table:girth} below. Similarly, by computing the diameter and using the fact that the number of vertices of a cubic graph of diameter $d$ cannot exceed $3\cdot 2^d - 2$, we obtain the values of $m_{cay}(3,d)$ and  $m_{vt}(3,d)$ for $d\le 8$ and bounds on these when $9\leq d \leq 12$.

\begin{table}[h]
\begin{center}
\begin{tabular}{cc}
\begin{tabular}{|c|c|c|}
\hline
$g$ &   $ n_{cay}(3,g)$ &   $n_{vt}(3,g)$  \\
\hline\hline
$3$ &        $4$   &             $4 $ \\
$4$  &        $6$        &  $        6$ \\
$5$   &       $50$      &  $             10$ \\
$6$   &       $14$         &  $          14$ \\
$7$   &       $30$     &  $             26$ \\
$8$   &       $42$     &  $              30$ \\
$9$   &       $60$   &  $               60$ \\
$10$   &     $96$     &  $             80$ \\
$11$    &    $192$      &  $            192$ \\
$12$     &   $162$      &  $            162$ \\
$13$    &    $272$      &  $            272$ \\
$14$    &     $406$     &  $             406$ \\
$15$    &    $864$       &  $           620$ \\
$16$    &  $1008$      &  $            1008$ \\
\hline
\end{tabular}

&
\begin{tabular}{c}
\begin{tabular}{|c|c|c|}
\hline
$d$ &$ m_{cay}(3,d)$ &   $m_{vt}(3,d)$ \\
\hline\hline
$2$ & $8$ & $ 10$\\
$3$ & $14$ & $14$\\
$4$  & $24$ & $ 30$\\
$5$  & $60$ & $60$\\
$6$  & $72$ & $82$\\
$7$  & $168$ & $168$\\
$8$  & $300$ & $300$ \\
$9$  & $\ge 506$ & $\ge 546$\\
$10$  & $\ge 882$ & $\ge 1250$ \\
$11$  & $\ge 1250$ & $\ge 1250$\\
$12$  & $\ge 1250$ & $\ge 1250$ \\
\hline
\end{tabular} \\  \\ \\ \\
\end{tabular}
\end{tabular}
\medskip
\caption{\small{Extremal order given the girth or diameter.}}\label{table:girth}
\end{center}
\end{table}

\section{$m=3$ (Cayley graphs)}\label{sec:Cayley}
The following definition will be very important in this section.
\begin{definition}
Let $G$ be a group and let $S$ be an inverse-closed and identity-free subset of $G$. The \emph{Cayley graph} $\Cay(G,S)$ has vertex-set $G$ and two vertices $u$ and $v$ are adjacent if and only if  $uv^{-1}\in S$.
\end{definition}
Note that, because of the restrictions imposed on $S$, all Cayley graphs in this paper will be simple. It is easy to see that $\Cay(G,S)$ is vertex-transitive and $|S|$-regular. Moreover, $\Cay(G,S)$ is connected if and only if $S$ generates $G$.

Our goal in this section is to construct all cubic graphs $\Gamma$ of order at most $1280$ which admit a vertex-transitive group $G$ such that $G_v=1$. The fact that $G_v=1$ implies that $G$ acts regularly on the vertices of $\Gamma$ and, by a result of Sabidussi~\cite{Sabidussi}, it follows that $\Gamma\cong\Cay(G,S)$ for some $S$.

It would therefore suffice to consider all groups $G$ of order at most $1280$, then, for each such group, to find all inverse-closed and identity-free $3$-subsets of $G$ and, finally, to test for isomorphism of the resulting graphs. This extremely naive method is not computationally feasible. Fortunately, a few simple ideas will greatly improve the situation. We start with two elementary observations, which are the main key to our approach. (We denote by $\ZZ_n$ the cyclic group of order $n$ and, for a group $G$, we denote its commutator subgroup by $G'$.)

\begin{lemma}\label{1}Let $G$ be a finite group and let $\Cay(G,S)$ be a connected Cayley graph of valency at most $3$. Then $G/G'$ is isomorphic to one of $\ZZ_2\times \ZZ_2\times \ZZ_2$, $\ZZ_2\times \ZZ_r$ (for some $r\geq 2$), or $\ZZ_r$ (for some $r\geq 1$).
\end{lemma}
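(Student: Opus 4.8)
The plan is to extract everything from the two hypotheses on $\Cay(G,S)$. By the remark just above, connectedness means exactly that $S$ generates $G$; and since $S$ is inverse-closed and has at most three elements, it can only have a very limited shape. Because $G/G'$ is abelian, it will then be enough to see that it is generated by few elements subject to a few order constraints, after which its isomorphism type follows from the structure theorem for finite abelian groups. So first I would pass to $A:=G/G'$, noting that the image of $S$ generates $A$, and I would partition $S$ into its set $I$ of involutions, of size $i$ say, and the remaining elements, which occur in pairs $\{s,s^{-1}\}$ with $s^2\ne 1$. Choosing one element from each pair gives $p$ non-involutions, and since $s^{-1}\in\langle s\rangle$ the group $G$ — hence also $A$ — is generated by the $i+p$ images of these chosen elements, exactly $i$ of which have order dividing $2$. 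The constraint $|S|=i+2p\le 3$ then forces $(i,p)$ to lie in the finite list $(0,0)$, $(1,0)$, $(2,0)$, $(3,0)$, $(0,1)$, $(1,1)$.

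Next I would identify $A$ in each of these cases. If $p=0$, then $A$ is generated by at most three elements of order dividing $2$, so $A$ is an elementary abelian $2$-group of rank at most $3$, i.e.\ one of $1$, $\ZZ_2$, $\ZZ_2\times\ZZ_2$, $\ZZ_2\times\ZZ_2\times\ZZ_2$. If $p=1$ and $i=0$, then $A$ is generated by a single element, hence cyclic. If $p=1$ and $i=1$, then $A$ is abelian and generated by two elements, one of which — the image $\bar a$ of the involution — has order dividing $2$; here I would record the elementary observation that $\langle\bar a\rangle$ either is contained in the cyclic subgroup generated by the other generator, so that $A$ is cyclic, or meets it trivially, so that $A\cong\ZZ_r\times\ZZ_2$ for a suitable $r$. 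Finally I would reconcile this with the statement: $1$ and $\ZZ_2$ are cyclic, $\ZZ_2\times\ZZ_2=\ZZ_2\times\ZZ_r$ with $r=2$, and $\ZZ_2\times\ZZ_1=\ZZ_2$, so in every case $A$ is isomorphic to one of the three groups listed.

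The whole argument is essentially bookkeeping, so there is no serious obstacle; the two places that need a moment's care are the small abelian-group fact used in the last case and, more importantly, the verification that the list of pairs $(i,p)$ is exhaustive. That verification is exactly where the hypothesis ``valency at most $3$'' is used in full: with valency $4$ one would also have to allow $(i,p)=(0,2)$, and then $A$ could be any abelian group generated by two elements with no order restriction — for instance $\ZZ_3\times\ZZ_3$ — which is not on the list, so the statement genuinely relies on the valency bound.
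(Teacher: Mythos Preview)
Your argument is correct and follows the same approach as the paper: note that $G/G'$ is abelian and generated by the image of an inverse-closed set of size at most $3$, then read off the possible isomorphism types. The paper's proof simply asserts that the conclusion follows from these two facts, whereas you have written out the case analysis in full; the content is the same.
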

\begin{proof}Since $\Cay(G,S)$ is connected and has valency at most $3$, it follows that $G$ has an inverse-closed set of generators of cardinality at most $3$ and hence so does the quotient group $G/G'$. The conclusion then follows from the fact that $G/G'$ is abelian.
\end{proof}

We denote by $\mathcal{P}_n$ the class of groups $G$ such that $G$ has order $n$ and $G/G'$ is isomorphic to one of $\ZZ_2\times \ZZ_2\times \ZZ_2$, $\ZZ_2\times \ZZ_r$, or $\ZZ_r$. In view of Lemma~\ref{1}, we may assume that $G$ is in $\mathcal{P}_n$ for some $n\leq 1280$. This fairly naive consideration drastically reduces the number of groups we have to consider. (For example, there are $1090235$ non-isomorphic groups of order $768$, but only $4810$ are in $\mathcal{P}_{768}$.) The \texttt{SmallGroups} database in \texttt{Magma} has a built-in filter that allows us to quickly construct $\mathcal{P}_n$ for $n\leq 1280$. Our second ingredient is the following old observation (see~\cite{Babai} for example).

\begin{lemma}\label{2} 
Let $G$ be a group, let $\varphi$ be an automorphism of $G$ and let $S$ be an inverse-closed and identity-free subset of $G$. Then $\Cay(G,S)\cong \Cay(G,S^\varphi)$.
\end{lemma}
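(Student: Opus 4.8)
The plan is to exhibit an explicit isomorphism. The natural candidate is the automorphism $\varphi$ itself, viewed as a bijection on the common vertex-set $G$ of the two Cayley graphs. First I would recall that $\V\Cay(G,S)=\V\Cay(G,S^\varphi)=G$, so that $\varphi\colon G\to G$ is at least a well-defined bijection between the two vertex-sets; it is a bijection precisely because it is a group automorphism. The only thing left is to check that it preserves adjacency in both directions.

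The key step is the adjacency computation. Two vertices $u,v\in G$ are adjacent in $\Cay(G,S)$ if and only if $uv^{-1}\in S$. Applying $\varphi$, the vertices $u^\varphi$ and $v^\varphi$ are adjacent in $\Cay(G,S^\varphi)$ if and only if $u^\varphi (v^\varphi)^{-1}\in S^\varphi$. Since $\varphi$ is a homomorphism, $u^\varphi(v^\varphi)^{-1}=(uv^{-1})^\varphi$, and since $\varphi$ is a bijection, $(uv^{-1})^\varphi\in S^\varphi$ if and only if $uv^{-1}\in S$. Chaining these equivalences shows that $u\sim v$ in $\Cay(G,S)$ exactly when $u^\varphi\sim v^\varphi$ in $\Cay(G,S^\varphi)$, which is precisely the statement that $\varphi$ is a graph isomorphism.

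I would also make the brief remark that $S^\varphi$ is itself inverse-closed and identity-free — $\varphi$ sends the identity to the identity and commutes with inversion — so that $\Cay(G,S^\varphi)$ is a legitimate simple Cayley graph in the sense of the definition; this is needed only to make the statement meaningful, not for the isomorphism argument. There is no real obstacle here: the lemma is essentially a one-line verification, and the ``hard part'', if any, is merely being careful about the direction of composition in the notation $u^\varphi$ and $S^\varphi$, so that the homomorphism identity $(uv^{-1})^\varphi=u^\varphi(v^\varphi)^{-1}$ is applied consistently.
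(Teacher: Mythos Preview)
Your argument is correct and is the standard proof: $\varphi$ is a bijection on the common vertex-set $G$, and the homomorphism property gives $(uv^{-1})^\varphi = u^\varphi (v^\varphi)^{-1}$, so adjacency is preserved in both directions. The paper does not actually prove this lemma; it simply records it as an ``old observation'' with a reference, so there is no alternative approach to compare against --- what you wrote is exactly the expected verification.
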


By Lemma~\ref{1} and Lemma~\ref{2}, to construct all connected cubic Cayley graphs on $n$ vertices, it suffices to consider groups in $\mathcal{P}_n$ and, for each such group $G$, to determine the $\Aut(G)$-classes of inverse-closed generating $3$-subsets of $G$.

Computationally, this is already much easier than the very naive method suggested at the beginning of this section.  

The complexity of the algorithms of Eick, Leedham-Green and O'Brian~\cite{ELO} for computing $\Aut(G)$ depends heavily on the number of generators of $G$ (and on the rank of the elementary abelian sections of $G$). Since the group $G$ is $3$-generated, computing $\Aut(G)$ takes a reasonable amount of time. 

However, the whole process is still not computationally trivial and a few more tricks are involved to speed up the computation. The most important one is to convert $G$ to a permutation group in \texttt{Magma} and to work with the holomorph of $G$ rather than $\Aut(G)$. Since \texttt{Magma} returns the holomorph of $G$ as a permutation group with its natural action on $G$, this allows the use of standard \texttt{Magma} functions to compute the orbits of $\Aut(G)$ on the set of involutions of $G$. Moreover, the time required in computing the holomorph of $G$ is more than compensated by the efficiency of the various \texttt{Magma} algorithms for permutation groups.

These improvements are enough to run the \texttt{Magma} computations to completion in a reasonable amount of time. This yields a list of all connected cubic Cayley graphs of a given order. It remains to test for isomorphism to get rid of repetitions. This is another computationally hard process. However, it is feasible in this case because the \texttt{nauty} algorithm of McKay~\cite{{McKaynauty}} is faster on graphs with relatively few edges (for example cubic graphs). (The authors of this paper noticed that replacing our graphs with the output of the \texttt{Magma} command \texttt{StandardGraph} seemed to speed up the isomorphism testing significantly.)

We were able to run this process to completion for every order $n\leq 1280$ except $512$ and $1024$. The list of groups of order $512$ is simply too large for the method above to complete in reasonable time. As for order $1024$, the situation is even worse: there is no exhaustive list of these groups available in the \texttt{Magma} library.

For these two orders, we use a different method which relies on the fact that $G$ is a $2$-group of a very specific kind. For $n$ a power of $2$, we denote by $\mathcal{R}_n$ the class of groups $G$ such that $G$ has order $n$ and $G$ admits a generating set consisting of $3$ involutions or of $2$ elements, one of which is an involution. The following lemma is the key remark concerning $\mathcal{R}_n$. 

\begin{lemma}\label{3} Let $G\in\mathcal{R}_{2^{i+1}}$ and let $C$ be a central subgroup of order $2$. Then $G/C\in\mathcal{R}_{2^i}$.
\end{lemma}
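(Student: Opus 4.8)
The plan is to check the two conditions defining membership in $\mathcal{R}_{2^i}$ in turn. That $G/C$ has order $2^i$ is immediate: $|G/C|=|G|/|C|=2^{i+1}/2=2^i$. For the shape of a generating set, I would push a generating set of $G$ of the appropriate kind down to the quotient and argue that it retains the required form, splitting according to which of the two allowed shapes the generating set of $G$ has. Throughout, write $\bar x=xC$ for the image of $x$ in $G/C$, and note that if $x$ is an involution then $\bar x^2=\overline{x^2}=C$, so $\bar x$ is trivial or an involution.

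First suppose $G=\langle a,b,c\rangle$ with $a,b,c$ involutions. Then $G/C=\langle\bar a,\bar b,\bar c\rangle$ with each of $\bar a,\bar b,\bar c$ trivial or an involution; discarding the trivial ones leaves $G/C$ generated by at most three involutions. If $i\ge2$, at least two of these must be nontrivial (a single involution generates a group of order only $2$), so $G/C$ is generated by two or three involutions — hence by a set of the required form, since two involutions constitute a two-element generating set one of which is an involution — and $G/C\in\mathcal{R}_{2^i}$. The leftover cases $i\le1$ give $G/C\cong\ZZ_2$ or the trivial group, which are immediate.

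Next suppose $G=\langle a,b\rangle$ with $a$ an involution. Then $G/C=\langle\bar a,\bar b\rangle$. If $\bar a$ is an involution, this is already a two-element generating set one of which is an involution, so $G/C\in\mathcal{R}_{2^i}$. If instead $\bar a=C$, then $a\in C$; since $a\ne1$ and $|C|=2$, this forces $C=\langle a\rangle$, whence $G/C=\langle\bar b\rangle$ is cyclic of order $2^i$. A cyclic $2$-group of order $2^i$ does lie in $\mathcal{R}_{2^i}$: for $i\ge1$ it is generated by $\bar b$ together with its unique involution $\bar b^{\,2^{i-1}}$, and for $i\le1$ it is again trivial or $\ZZ_2$. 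This exhausts all the cases.

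The one step that is not purely mechanical is the last subcase above, in which the distinguished involution $a$ of the generating pair of $G$ falls into $C$: here $G/C$ need not be generated by involutions at all, and the point is that it is instead forced to be cyclic and therefore re-enters $\mathcal{R}_{2^i}$ through the second clause of the definition (a generator together with the power of it that happens to be an involution) rather than through the ``three involutions'' clause. I expect that observation — together with the bookkeeping of the degenerate small-order cases $i\le 1$ — to be the only place requiring any thought.
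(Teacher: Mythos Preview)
Your proof is correct. The paper itself states Lemma~\ref{3} without proof, evidently regarding it as an immediate consequence of the definition of $\mathcal{R}_n$; your case analysis supplies exactly the routine verification the paper omits, including the one mildly subtle subcase where the distinguished involution of a two-element generating set lies in $C$ and the quotient becomes cyclic.
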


As we will now explain, Lemma~\ref{3} can be used to construct $\mathcal{R}_{2^i}$ by induction over $i$. We first explain how to construct all possible $G$'s given $K=G/C\in\mathcal{R}_{2^i}$.

Let $K\in\mathcal{R}_{2^i}$ and compute the second cohomology group $H^2(K,\ZZ_2)$ where $\ZZ_2$ is viewed as the trivial $K$-module. For every $2$-cocycle of $H^2(K,\ZZ_2)$, we can use the built-in \texttt{Magma} command to compute the corresponding extension $G$ (with a  power-commutator-presentation) and then check whether $G\in\mathcal{R}_{2^{i+1}}$. By Lemma~\ref{3}, letting $K$ run over the groups in $\mathcal{R}_{2^i}$ and repeating this procedure yields all the groups in $\mathcal{R}_{2^{i+1}}$. We then test for group isomorphism and keep only one representative of each isomorphism class.

Using this procedure inductively, we can construct $\mathcal{R}_{512}$ and $\mathcal{R}_{1024}$. Finally, we simply use the same procedure we used for other orders but we replace $\mathcal{P}_{512}$ and $\mathcal{P}_{1024}$ by $\mathcal{R}_{512}$ and $\mathcal{R}_{1024}$, respectively.

We end this section with a few comments regarding the computational difficulty of our method of computing $\mathcal{R}_{512}$ and $\mathcal{R}_{1024}$. 
In general, given two groups of order $1024$ it is extremely difficult to check whether they are isomorphic (some interesting examples are in~\cite{ELO}). As before, our case is easier because $G$ is $3$-generated. Computing $H^2(H,\ZZ_2)$ for a $2$-group of Frattini class $2$ is again extremely difficult in general: typically  $H^2(H,\ZZ_2)$ is a vector space of very large dimension. In our case, because $H$ is $3$-generated, it follows from~\cite[Corollary~2.2]{Neumann} that this dimension is more modest.

\section{$m=2$ (Tetravalent arc-transitive graphs)}
Our goal in this section is to construct all cubic graphs $\Gamma$ of order at most $1280$ which admit a vertex-transitive group $G$ such that $G_v$ has exactly two orbits on $\Gamma(v)$. In Section~\ref{LocallyC2}, we will construct an auxiliary tetravalent arc-transitive graph and show that $\Gamma$ can be recovered from it. This will reduce our problem to determining all tetravalent arc-transitive graphs of a certain type up to a certain order. We will then show how to accomplish this in Section~\ref{4valent}.

\subsection{Locally-$\ZZ_2^{[3]}$ pairs}\label{LocallyC2}
We first introduce the following definition.

\begin{definition} \label{def:locally}
Let $L$ be a permutation group, let $\Gamma$ be a $G$-vertex-transitive graph and let $v$ be a vertex of $\Gamma$. We denote by $G_v^{\Gamma(v)}$ the permutation group induced by the stabiliser $G_v$ of the vertex $v\in \V\Gamma$ on the neighbourhood $\Gamma(v)$. If $G_v^{\Gamma(v)}$ is permutation isomorphic to $L$, then we say that $(\Gamma,G)$ is \emph{locally-$L$}. 
\end{definition}

Let $\Gamma$ be a cubic $G$-vertex-transitive graph. The case which we called $m=2$ in Section~\ref{sec:Intro} (when $G_v$ has exactly two orbits on $\Gamma(v)$) is equivalent to requiring that the pair $(\Gamma,G)$ be locally-$\ZZ_2^{[3]}$ (where $\ZZ_2^{[3]}$ denotes the permutation group of order $2$ and degree $3$). It thus suffices to determine all cubic graphs of order at most $1280$ which admit a locally-$\ZZ_2^{[3]}$ vertex-transitive group.

We will also need the concept of an arc-transitive cycle decomposition, which was studied in some detail in~\cite{CycleDec}.
\begin{definition}\label{def:cycledec}
A \emph{cycle} in a graph is a connected regular subgraph of valency 2. A \emph{cycle decomposition} $\cC$ of a graph $\Gamma$ is a set of cycles in $\Gamma$ such that each edge of $\Gamma$ belongs to exactly one cycle in $\cC$. If there exists an arc-transitive group $G$ of automorphisms of $\Gamma$ that maps every cycle of $\cC$ to a cycle in $\cC$, then $\cC$ will be called \emph{$G$-arc-transitive}.
\end{definition}

Our main tool in this section is the following construction. The important facts about it will be proved in Theorem~\ref{MergeTheorem}.

\begin{construction}\label{cons:merge}
The input of this construction is a locally-$\ZZ_2^{[3]}$ pair $(\Gamma,G)$, where $\Gamma$ is a cubic graph. The output is a graph $\Merge(\Gamma,G)$ and a partition $\cC(\Gamma,G)$ of the edges of $\Merge(\Gamma,G)$. (It will be shown in Theorem~\ref{MergeTheorem} that, under some mild conditions, $\Merge(\Gamma,G)$ is tetravalent and $\cC(\Gamma,G)$ is a $G$-arc-transitive cycle decomposition of $\Merge(\Gamma,G)$).

Clearly, $G_v$ fixes exactly one neighbour of $v$ and hence each vertex $u\in\V(\Gamma)$ has a unique neighbour (which we will denote $u'$) with the property that $G_u=G_{u'}$. Observe that, for every $g\in G$ and every $v\in\V(\Gamma)$, we have $v''=v$ and $(v')^g=(v^g)'$. It follows that the set $\{\{v,v'\} \colon v\in\V(\Gamma)\}$ (which we will denote $\cT$) is a $G$-edge-orbit forming a perfect matching of $\Gamma$.

Furthermore, since $G$ is vertex-transitive and $G_v$ has two orbits on $\Gamma(v)$ (one of them being $\{v'\}$ and the other one being $\Gamma(v)\setminus\{v'\}$), $G$ has exactly two arc-orbits, and, since $G$ is not edge-transitive, $G$ also has exactly two edge-orbits (one of them being $\cT$). Since $\cT$ forms a perfect matching, the other edge-orbit (which we will call $\cR$) induces a subgraph isomorphic to a disjoint union of cycles, say $C_1,\ldots,C_n$. Let $\cP=\{C_1,\ldots,C_n\}$.

We define a new graph $\Merge(\Gamma,G)$, with vertex-set $\cT$ and two elements $\{u,u'\}$ and $\{v,v'\}$ of $\cT$ adjacent if and only if there is an edge in $\Gamma$ between  $\{u,u'\}$ and $\{v,v'\}$; that is, if and only if there is a member of $\{u,u'\}$ adjacent to a member of $\{v,v'\}$ in $\Gamma$. (We remark that this graph is the quotient graph of $\Gamma$ with respect to the vertex-partition $\cT$.)

Finally, let $\iota$ be the map 
$$
 \iota\colon\{u,v\} \mapsto \{\{u,u'\},\{v,v'\}\}
$$
from $\cR$ to the edge-set of $\Merge(\Gamma,G)$ and let $\cC(\Gamma,G)=\{\iota(C_1),\ldots,\iota(C_n)\}$.
\EOC
\end{construction}

Let $(\Gamma,G)$ be a locally-$\ZZ_2^{[3]}$ pair and assume the terminology of Construction~\ref{cons:merge}. The pair $(\Gamma,G)$ is called \emph{degenerate} if for some elements  $\{u,u'\}$ and $\{v,v'\}$ of $\cT$, there is more than one edge in $\Gamma$ between $\{u,u'\}$ and $\{v,v'\}$. We will show in Lemma~\ref{lemma:degenerate} that, in this case, $\Gamma$ belongs to one of two very specific families of graphs which we now define.

\begin{definition}
A \emph{circular ladder graph} is a graph isomorphic to the Cayley graph $\Cay(\ZZ_n\times\ZZ_2,\{(0,1),(1,0),(-1,0)\})$ for some $n\geq 3$. A \emph{M\"{o}bius ladder graph} is a graph  isomorphic to the Cayley graph $\Cay(\ZZ_{2n},\{1,-1,n\})$ for some $n\geq 2$ (note that we consider the complete graph on $4$ vertices to be a M\"{o}bius ladder).
\end{definition}

\begin{lemma}\label{lemma:degenerate}
If $(\Gamma,G)$ is a degenerate locally-$\ZZ_2^{[3]}$ pair, then $\Gamma$ is either a circular ladder graph or a M\"{o}bius ladder graph.
\end{lemma}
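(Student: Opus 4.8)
The plan is to analyze the local structure forced by degeneracy and then propagate it around the graph using vertex-transitivity. Suppose $(\Gamma,G)$ is degenerate, so there exist $\{u,u'\},\{v,v'\}\in\cT$ with at least two edges of $\Gamma$ between them. Since $\cT$ is a perfect matching and $\Gamma$ is cubic, each of $u,u'$ has exactly one neighbour outside $\{u,u'\}$ accounted for by the $\cR$-edges (the other neighbour being its $\cT$-partner). Having two edges between the two pairs therefore forces, up to relabelling, that $u\sim v$ and $u'\sim v'$ (a ``parallel'' pair), or $u\sim v$ and $u'\sim v$ — but the latter would make $v$ adjacent to both $u$ and $u'$ and to $v'$, using all three of $v$'s edges, and then $v'$ has only its $\cT$-edge plus needs an $\cR$-edge, contradiction unless $\Gamma$ is very small; I would dispose of these tiny degenerate cases ($K_4$, the triangular prism, etc.) by hand first. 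So generically degeneracy means: there is an edge $\{u,v\}\in\cR$ whose $\cT$-partners $u',v'$ also satisfy $\{u',v'\}\in\cR$.

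Next I would use vertex-transitivity of $G$ to show this is not a local accident but a global pattern: applying elements of $G$ to the configuration $u\sim v$, $u'\sim v'$, $u\sim u'$, $v\sim v'$ shows that every vertex $w$ has its (unique) $\cR$-neighbour $x$ with the property that $w'\sim x'$. In other words, the ``partner'' map $w\mapsto w'$ is an isomorphism between the two cycles (one through $u,v,\dots$ and one through $u',v',\dots$) obtained by travelling along $\cR$-edges, and the $\cT$-edges are exactly the ``rungs'' joining corresponding vertices. Thus $\Gamma$ is built from a cycle $C$ and its partner-copy $C'$ joined by a perfect matching of rungs, each vertex of $C$ matched to the corresponding vertex of $C'$. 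The only freedom is whether, as one goes around $C$ once, the partner-copy closes up as the same cycle (circular ladder) or with a shift by half — i.e., whether the ``rung'' matching is orientation-preserving (circular ladder $\Cay(\ZZ_n\times\ZZ_2,\{(0,1),(\pm1,0)\})$) or introduces one twist (M\"obius ladder $\Cay(\ZZ_{2n},\{\pm1,n\})$). This is a standard dichotomy for prism-like graphs: the two components of $\cR$ together with $\cT$ form a cubic graph in which every vertex lies on a unique $\cT$-edge and two $\cR$-edges, and such a graph is exactly a single cycle with a perfect matching of ``parallel'' chords, which is precisely a circular or M\"obius ladder according to the parity of the crossing.

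Concretely, I would finish as follows. Fix a component $C_i=(w_0,w_1,\ldots,w_{k-1},w_0)$ of the subgraph induced by $\cR$ and let $w_j'$ denote the $\cT$-partner of $w_j$. The claim from the previous paragraph gives $w_j'\sim w_{j+1}'$ for all $j$, so the $w_j'$ also form a closed walk along $\cR$-edges; since $\cR$ has valency $2$ this walk traverses a single component $C_\ell$, and counting shows $C_\ell$ has the same length $k$ (possibly $C_\ell=C_i$). If $C_\ell\neq C_i$, then $\V\Gamma=\{w_0,\ldots,w_{k-1}\}\cup\{w_0',\ldots,w_{k-1}'\}$, the $\cR$-edges are $\{w_j,w_{j+1}\}$ and $\{w_j',w_{j+1}'\}$, and the $\cT$-edges are $\{w_j,w_j'\}$: that is exactly the circular ladder $\Cay(\ZZ_k\times\ZZ_2,\{(0,1),(1,0),(-1,0)\})$. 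If $C_\ell=C_i$, then $j\mapsto j'$ (with $w_{j'}:=w_j'$) is a fixed-point-free involution of $\ZZ_k$ commuting with the successor map $j\mapsto j+1$, hence $j'=j+k/2$; so $k$ is even, $\Gamma$ has $k$ vertices with edges $\{w_j,w_{j\pm1}\}$ and $\{w_j,w_{j+k/2}\}$, which is the M\"obius ladder $\Cay(\ZZ_k,\{1,-1,k/2\})$.

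The main obstacle I anticipate is the first step: carefully ruling out the ``non-parallel'' degenerate configurations (where the two edges between $\{u,u'\}$ and $\{v,v'\}$ share a vertex) and the handful of genuinely small exceptional graphs, and then proving cleanly — using only vertex-transitivity of $G$ together with the canonical, $G$-invariant description of $w'$ — that the partner map really does send $\cR$-edges to $\cR$-edges \emph{globally}. Once that global statement is in hand, the ladder dichotomy is a routine combinatorial identification of the form sketched above, with the only subtlety being the bookkeeping that shows the partner-copy of a component is again a single component of the same length and that the twist, if present, is exactly by half the length.
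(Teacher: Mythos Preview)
Your overall strategy is the same as the paper's: rule out the ``non-parallel'' configuration (where the two cross-edges share an endpoint), then use $G$-transitivity to propagate the parallel configuration globally, and finally read off the ladder/M\"obius dichotomy from the $\cR$-cycle structure. The paper phrases the propagation step in terms of ``special $4$-cycles'' (those containing two $\cT$-edges), but this is just another name for your partner-map observation: the special $4$-cycle through an $\cR$-edge $\{u,v\}$ is precisely $(u,v,v',u')$, existing if and only if $\{u',v'\}\in\cR$.

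Two things in your write-up need repair. First, every vertex has \emph{two} $\cR$-neighbours, not one; your sentences ``each of $u,u'$ has exactly one neighbour outside $\{u,u'\}$ accounted for by the $\cR$-edges'' and ``its (unique) $\cR$-neighbour'' are simply wrong and should be rewritten. Second, the non-parallel case is not a grab-bag of ``tiny degenerate cases ($K_4$, the triangular prism, etc.)'' to be checked by hand---it forces exactly $K_4$, and the paper's argument for this is short and worth knowing: if $v$ is adjacent to both $u$ and $u'$ then the $\cT$-edge $\{u,u'\}$ lies in a $3$-cycle; by transitivity of $G$ on $\cT$ the edge $\{v,v'\}$ also lies in a $3$-cycle, whose third vertex must lie in $\Gamma(v)\setminus\{v'\}=\{u,u'\}$; now the nontrivial element of $G_v$ swaps $u$ and $u'$ while fixing $v'$, so both $u$ and $u'$ are adjacent to $v'$ and $\Gamma\cong K_4$. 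Finally, note that your propagation step really uses that $\cR$ is a single $G$-edge-orbit (established in Construction~\ref{cons:merge}), not merely vertex-transitivity; you have this available, but you should invoke it explicitly.
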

\begin{proof}
Assume the notation of Construction~\ref{cons:merge}. Suppose that there exists an edge $\{u,u'\}$ of the perfect matching $\cT$ contained in a $3$-cycle $(u,u',v)$ of $\Gamma$. Then $\Gamma(v)=\{u,u',v'\}$ and, since $G$ acts transitively on $\cT$, $\{v,v'\}$ is also contained in a $3$-cycle. Since $\Gamma(v) \setminus \{v'\} = \{u,u'\}$, that $3$-cycle contains one of $u$ or $u'$. However, there exists an element of $G_v$ which acts nontrivially on $\Gamma(v)$ and hence fixes $(v,v')$ and swaps $u$ and $u'$. By applying this element, we see that the subgraph induced on $\{v,v',u,u'\}$ is a complete graph, and thus $\Gamma$ is isomorphic to the complete graph on $4$ vertices, a M\"{o}bius ladder graph.

We may thus assume that no edge of $\cT$ is contained in a $3$-cycle. Since $(\Gamma,G)$ is degenerate, there exist two edges $\{u,u'\}$ and $\{v,v'\}$ of $\cT$ such that there are two edges of $\Gamma$ between $\{u,u'\}$ and $\{v,v'\}$. Since no edge of $\cT$ is contained in a $3$-cycle, it follows that the subgraph of $\Gamma$ induced on $\{v,v',u',u\}$ is a $4$-cycle $C$. Call a $4$-cycle of $\Gamma$ \emph{special} if it contains two edges of $\cT$. Then $C$ is special and every special $4$-cycle is chordless. Let $e$ be an edge of $\cR$ contained in $C$. Clearly, $C$ is the only special $4$-cycle containing $e$. Since $\cR$ is a $G$-edge-orbit, it follows that each edge of $\cR$ is contained in exactly one special $4$-cycle and hence each edge of $\cT$ is contained in exactly two special $4$-cycles.

Let $2n$ be the order of $\Gamma$. Since $\Gamma$ is connected, we may label the vertices of $\Gamma$ with the set $\ZZ_n\times\ZZ_2$ in such a way that for every $i\in\ZZ_n$, the set $\{(i,0),(i,1)\}$ forms an edge of $\cT$ and for every $i\in\{1,\ldots,n-1\}$ and every $j\in\ZZ_2$, the set $\{(i,j),(i+1,j)\}$ forms an edge of $\cR$. The vertex labelled $(n,0)$ must be adjacent to either the vertex labelled $(1,0)$ or the one labelled $(1,1)$. In the former case, $\{(n,1),(1,1)\}$ forms an edge of $\Gamma$ and $\Gamma$ is a circular ladder graph, while in the latter case, $\{(n,1),(1,0)\}$ forms an edge of $\Gamma$ and $\Gamma$ is a M\"{o}bius ladder graph.
\end{proof}

We will therefore henceforth assume that $(\Gamma,G)$ is a non-degenerate locally-$\ZZ_2^{[3]}$ pair. Theorem~\ref{MergeTheorem} collects the facts we need about Construction~\ref{cons:merge} in this case.

\begin{theorem}\label{MergeTheorem}
Let $(\Gamma,G)$ be a locally-$\ZZ_2^{[3]}$ pair that is not degenerate and let $\Merge(\Gamma,G)$ and $\cC(\Gamma,G)$ be the output of Construction~\ref{cons:merge} applied to $(\Gamma,G)$. Then $\Merge(\Gamma,G)$ is a connected tetravalent $G$-arc-transitive graph with order half that of $\Gamma$. Moreover, $\cC(\Gamma,G)$ determines a $G$-arc-transitive cycle decomposition of $\Merge(\Gamma,G)$.
\end{theorem}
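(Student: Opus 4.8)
The plan is to verify each assertion of the theorem in turn, working always inside the notation of Construction~\ref{cons:merge}: the perfect matching $\cT$, the complementary edge-orbit $\cR$ which induces a disjoint union of cycles $C_1,\ldots,C_n$, and the quotient graph $\Merge(\Gamma,G)$ with vertex-set $\cT$. First I would check that $\Merge(\Gamma,G)$ is tetravalent with half the order of $\Gamma$. The vertex count is immediate since $\cT$ is a perfect matching, so $|\cT| = |\V\Gamma|/2$. For the valency: a vertex $\{u,u'\}$ of $\Merge(\Gamma,G)$ is adjacent to $\{v,v'\}$ whenever some edge of $\Gamma$ joins the two pairs. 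Each of $u$ and $u'$ has exactly two neighbours outside the pair $\{u,u'\}$ (its third neighbour being its partner), so $\{u,u'\}$ has four edges of $\cR$ leaving it; the key point is that non-degeneracy guarantees these four edges land in four \emph{distinct} pairs, giving valency exactly $4$. This is precisely where the degenerate cases excluded by Lemma~\ref{lemma:degenerate} would otherwise cause trouble, so this is the one place non-degeneracy is essential.

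Next I would establish connectedness: since $\Gamma$ is connected and $\Merge(\Gamma,G)$ is a quotient of $\Gamma$ (collapsing each matching edge to a point), any walk in $\Gamma$ projects to a walk in $\Merge(\Gamma,G)$, so $\Merge(\Gamma,G)$ is connected. For the $G$-arc-transitivity: $G$ acts on $\cT$ because, as noted in the construction, $(v')^g = (v^g)'$, so $g$ permutes the matching edges; this action is clearly by graph automorphisms of $\Merge(\Gamma,G)$. To see it is arc-transitive, recall that $\cR$ is a single $G$-orbit on edges of $\Gamma$; an arc $(\{u,u'\},\{v,v'\})$ of $\Merge(\Gamma,G)$ comes from an edge of $\cR$ between the two pairs, and I would argue that $G$ is transitive on such ordered configurations. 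Concretely, $G_{\{u,u'\}}$ contains $G_u = G_{u'}$ together with an element swapping $u$ and $u'$ (coming from the nontrivial element of $G_v^{\Gamma(v)}$ applied at $u$), and one checks this stabiliser of a vertex of $\Merge(\Gamma,G)$ acts transitively on the four edges of $\cR$ incident to it; combined with transitivity of $G$ on $\cR$ (hence on edges of $\Merge(\Gamma,G)$), this yields arc-transitivity. The slightly delicate point is confirming that the stabiliser really acts transitively on all four incident edges rather than on two orbits of size two — this should follow because $G_v^{\Gamma(v)} \cong \ZZ_2^{[3]}$ acts on $\Gamma(u)\setminus\{u'\}$ transitively once we also use transitivity of $G$ on $\cR$, but it deserves care.

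Finally I would treat the cycle decomposition. The map $\iota$ sends an edge $\{u,v\}$ of $\cR$ to the edge $\{\{u,u'\},\{v,v'\}\}$ of $\Merge(\Gamma,G)$; I would check $\iota$ is well-defined and a bijection from $\cR$ onto the edge-set of $\Merge(\Gamma,G)$ (surjectivity is the valency-$4$ argument again, injectivity is non-degeneracy). Under $\iota$, each cycle $C_i$ in $\Gamma$ — a subgraph of valency $2$ in the $\cR$-subgraph — maps to a closed walk in $\Merge(\Gamma,G)$; I would argue it maps to an actual cycle (it is again a valency-$2$ connected subgraph, using that $\iota$ is injective and that consecutive edges of $C_i$ meet $\cT$ in the expected way). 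Since $\{C_1,\ldots,C_n\}$ partitions $\cR$ and $\iota$ is a bijection onto the edge-set, $\cC(\Gamma,G) = \{\iota(C_1),\ldots,\iota(C_n)\}$ partitions the edges of $\Merge(\Gamma,G)$, i.e.\ it is a cycle decomposition. That $G$ preserves this decomposition is clear since $G$ permutes $\{C_1,\ldots,C_n\}$ (they are the connected components of the $\cR$-subgraph, which $G$ preserves) and $\iota$ is $G$-equivariant; together with the already-established $G$-arc-transitivity this makes $\cC(\Gamma,G)$ a $G$-arc-transitive cycle decomposition in the sense of Definition~\ref{def:cycledec}. The main obstacle throughout is the careful bookkeeping at a single vertex of $\Merge(\Gamma,G)$ to pin down exactly how $G_{\{u,u'\}}$ acts on the four incident edges; everything else is routine transport of structure along the quotient map.
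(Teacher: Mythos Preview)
Your overall strategy matches the paper's: establish order, valency, connectedness, arc-transitivity, and then transport the cycle structure through $\iota$. Two points are worth flagging.

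First, there is a genuine omission. You never verify that the action of $G$ on $\cT$ is \emph{faithful}. The theorem asserts that $\Merge(\Gamma,G)$ is $G$-arc-transitive, and by the paper's convention (Definition in Section~\ref{sec:Intro}) this means $G$ is literally a subgroup of $\Aut(\Merge(\Gamma,G))$, not merely that some quotient of $G$ acts. The paper devotes a short paragraph to this: if some non-trivial $g\in G$ fixed every element of $\cT$, then $v^g=v'$ for some $v$, and the image under $g$ of an $\cR$-edge at $v$ would produce a second edge between $\{v,v'\}$ and an adjacent pair, contradicting non-degeneracy. This is easy, but it is needed, and it is a second essential use of non-degeneracy beyond the valency count.

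Second, your route to arc-transitivity is more laborious than necessary. You propose to analyse the stabiliser $G_{\{u,u'\}}$ and show it acts transitively on the four incident edges; you correctly identify this as the delicate step (one must check that some element of $G_u=G_{u'}$ swaps the two $\cR$-neighbours of $u$ \emph{and} the two $\cR$-neighbours of $u'$, which is not entirely automatic). The paper bypasses this entirely: it observes that $G$ has exactly two arc-orbits on $\Gamma$, so $G$ is already transitive on $\vec\cR$ (the arcs underlying edges of $\cR$); then the map $\vec\iota\colon (u,v)\mapsto(\{u,u'\},\{v,v'\})$ is a $G$-equivariant bijection from $\vec\cR$ onto the arc-set of $\Merge(\Gamma,G)$, and arc-transitivity follows immediately. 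This avoids any local analysis of $G_{\{u,u'\}}$. Your remaining arguments (valency via non-degeneracy, connectedness via the quotient, the cycle decomposition via $\iota$) are essentially the same as the paper's.
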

\begin{proof}
Assume the terminology of Construction~\ref{cons:merge} and let $\Lambda=\Merge(\Gamma,G)$. It is easy to see that, since $\Gamma$ is connected, so is $\Lambda$. Since $\cT$ is a perfect matching of $\Gamma$, its cardinality (and hence the order of $\Lambda$) is half the order of $\Gamma$. Moreover, since $(\Gamma,G)$ is non-degenerate, for each edge $\{\{u,u'\},\{v,v'\}\}$ of $\Lambda$, there is a unique edge of $\Gamma$ (indeed in $\cR$) between $\{u,u'\}$ and $\{v,v'\}$. Conversely, each edge $\{u,v\}$ in $\cR$ induces a unique edge $\{\{u,u'\},\{v,v'\}\}$ of $\Lambda$. In particular, $\iota$ is a bijection. It is trivial to verify that, for every $g\in G$, we have $\iota(\{u,v\}^g) = \iota(\{u,v\})^g$, implying that $\iota$ is an isomorphism between the actions of $G$ on $\cR$ and on the edge-set of $\Lambda$. In particular, since $G$ preserves the edge-set of $\Lambda$, the elements of $G$ can be viewed as automorphisms of $\Lambda$.

The same argument clearly applies to arcs; that is, if we denote by $\vec{\cR}$ the set of underlying arcs of edges contained in $\cR$, then the function
$$
 \vec\iota\colon (u,v) \mapsto (\{u,u'\},\{v,v'\})
$$
is a bijection from $\vec{\cR}$ to the arc-set of $\Lambda$. Again, $\vec\iota$ is an isomorphism between the actions of $G$ on $\vec{\cR}$ and on the arc-set of $\Lambda$. Since $G$ acts transitively on $\vec\cR$, it follows that $\Lambda$ is $G$-arc-transitive.  In particular, $\Lambda$ is regular and a simple counting of the edges yields that it must be tetravalent.

We now show that the action of $G$ on $\cT$ (and thus on the vertices of $\Lambda$) is faithful. Suppose that the kernel of this action contains a non-trivial element $g$. Then $v^g=v'$ for some vertex $v$ of $\Gamma$. Let $e=\{u,v\}$ be an edge of $\cR$ incident with $v$. Then $e^g$ is an edge between $\{u,u'\}$ and $\{v,v'\}$ distinct from $e$. It follows that the pair $(\Gamma,G)$ is degenerate, contrary to our assumption. We conclude that $G$ acts faithfully on $\cT$ and hence $G\leq\Aut(\Lambda)$.

Observe that if two edges $e_1,e_2\in \cR$ are adjacent in $\Gamma$, then $\iota(e_1)$ and $\iota(e_2)$ are adjacent in $\Lambda$. It follows that $\iota(C_1),\ldots,\iota(C_n)$ forms a decomposition of the edge-set of $\Lambda$ into cycles. Note that $G$ preserves the partition $\cP$ of $\cR$ and hence $G$ also preserves the decomposition $\cC(\Gamma,G)=\{\iota(C_1),\ldots,\iota(C_n)\}$. In particular, $\cC(\Gamma,G)$ is a $G$-arc-transitive cycle decomposition of $\Lambda$.
\end{proof}

It will be shown in Theorem~\ref{theo:split} that the following construction is in some sense (which will be made precise) a left-inverse of Construction~\ref{cons:merge}.

\begin{construction}\label{cons:split}
The input of this construction is a pair $(\Lambda,\cC)$, where $\Lambda$ is a tetravalent arc-transitive graph and $\cC$ is an arc-transitive cycle decomposition of $\Lambda$. The output is the graph $\Split(\Lambda,\cC)$, the vertices of which are the pairs $(v,C)$ where $v\in\V(\Lambda)$, $C\in\cC$ and $v$ lies on the cycle $C$, and two vertices $(v_1,C_1)$ and $(v_2,C_2)$ are adjacent if and only if either $C_1\neq C_2$ and $v_1=v_2$, or $C_1=C_2$ and $\{v_1,v_2\}$ is an edge of $C_1$.
\EOC
\end{construction}

\begin{theorem}\label{theo:split}
Let $(\Gamma,G)$ be a locally-$\ZZ_2^{[3]}$ pair that is not degenerate, let $\Merge(\Gamma,G)$ and $\cC(\Gamma,G)$ be the output of Construction~\ref{cons:merge} applied to $(\Gamma,G)$ and let $$\Gamma'=\Split(\Merge(\Gamma,G),\cC(\Gamma,G))$$ be the output of Construction~\ref{cons:split} applied to $(\Merge(\Gamma,G),\cC(\Gamma,G))$. Then $\Gamma'\cong\Gamma$.
\end{theorem}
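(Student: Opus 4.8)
The plan is to exhibit an explicit isomorphism $\phi\colon\Gamma\to\Gamma'$ built directly from the bookkeeping in Construction~\ref{cons:merge}. Retain the notation there: $\cT=\{\{v,v'\}\colon v\in\V\Gamma\}$ is the $G$-invariant perfect matching, $\cR$ is the other $G$-edge-orbit, whose induced subgraph is a disjoint union of cycles $C_1,\dots,C_n$ (this subgraph is $2$-regular since every vertex lies on a unique edge of $\cT$), $\cP=\{C_1,\dots,C_n\}$, and $\iota$ is the bijection from $\cR$ to the edge-set of $\Merge(\Gamma,G)$. Write $\Lambda=\Merge(\Gamma,G)$ and $\cC=\cC(\Gamma,G)=\{\iota(C_1),\dots,\iota(C_n)\}$; by Theorem~\ref{MergeTheorem}, $\Lambda$ is connected and tetravalent, $\iota$ is a bijection, and $\cC$ is a cycle decomposition of $\Lambda$, so each member of $\cC$ is a (simple) cycle. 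For $v\in\V\Gamma$ let $C(v)$ be the unique member of $\cP$ through $v$ and put $D_v:=\iota(C(v))\in\cC$.

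The first real step is to pin down the cycles of $\cC$ through a vertex $\{v,v'\}$ of $\Lambda$. The four edges of $\Gamma$ at $\{v,v'\}$ other than $\{v,v'\}$ itself are four \emph{distinct} edges of $\cR$ (two at $v$, lying on $C(v)$; two at $v'$, lying on $C(v')$), and since $\iota$ is a bijection and $\Lambda$ is tetravalent their images are exactly the four edges of $\Lambda$ at $\{v,v'\}$. The two from $v$ lie on $D_v$ and the two from $v'$ lie on $D_{v'}$; if $D_v=D_{v'}$, this single simple cycle would be incident with four edges at $\{v,v'\}$, which is impossible. Hence $D_v\neq D_{v'}$, and since in a cycle decomposition of a tetravalent graph exactly two of the cycles pass through each vertex, $D_v$ and $D_{v'}$ are precisely the cycles of $\cC$ through $\{v,v'\}$.

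Next I would define $\phi(v)=(\{v,v'\},D_v)$. Since $(v')'=v$, we get $\phi(v')=(\{v,v'\},D_{v'})$, and by the previous paragraph each $\phi(v)$ is a genuine vertex of $\Gamma'$. The map $\phi$ is a bijection: if $\phi(v)=\phi(w)$ then $w\in\{v,v'\}$ and $D_v=D_w$, and $D_v\neq D_{v'}$ forces $w=v$; surjectivity holds because any vertex $(\{v,v'\},D)$ of $\Gamma'$ has $D\in\{D_v,D_{v'}\}$, hence equals $\phi(v)$ or $\phi(v')$. Then I check $\phi$ carries edges to edges, treating the two $G$-edge-orbits separately. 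For an edge $\{v,v'\}\in\cT$, the vertices $\phi(v)$ and $\phi(v')$ share the first coordinate $\{v,v'\}$ and have distinct cycle-coordinates, so they are adjacent in $\Gamma'$ by the first alternative of Construction~\ref{cons:split}. For an edge $\{v,w\}\in\cR$ with $w\neq v'$, the vertex $w$ lies on $C(v)$, so $C(w)=C(v)$ and $D_w=D_v$, while $\iota(\{v,w\})=\{\{v,v'\},\{w,w'\}\}$ is an edge of $D_v$; thus $\phi(v)=(\{v,v'\},D_v)$ and $\phi(w)=(\{w,w'\},D_v)$ are adjacent by the second alternative.

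Finally, since $\phi$ is a vertex-bijection sending edges to edges, and distinct edges to distinct edges, it is an isomorphism as soon as $|E(\Gamma)|=|E(\Gamma')|$; and this holds because $\Gamma'$ is cubic on $|\V\Gamma|$ vertices: a vertex $(x,C)$ has exactly one neighbour $(x,C')$ (as $x$ lies on exactly two cycles of $\cC$) and exactly two neighbours $(y,C)$ (the two cycle-neighbours of $x$ on the simple cycle $C$), and $|\V\Gamma'|=2|\V\Lambda|=|\V\Gamma|$. I expect the only genuinely subtle point to be the inequality $D_v\neq D_{v'}$ — equivalently, that non-degeneracy forces no cycle of $\cP$ to meet both ends of a matching edge of $\cT$ — with everything else being routine unwinding of the two constructions. (It is worth noting that $\phi$ is in fact $G$-equivariant, since $(v^g)'=(v')^g$ and $\iota$, hence $v\mapsto D_v$, is $G$-equivariant; this is not needed for the statement but is immediate.)
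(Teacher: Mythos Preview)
Your proof is correct and follows essentially the same approach as the paper: both construct the natural bijection between $\V\Gamma$ and $\V\Gamma'$ (you build $\phi\colon\Gamma\to\Gamma'$, the paper builds its inverse $\theta\colon\Gamma'\to\Gamma$) and verify it respects adjacency. The one substantive difference is how the key fact ``$v$ and $v'$ never lie on the same cycle of $\cP$'' is justified: the paper argues directly from non-degeneracy via a one-step rotation of the cycle, whereas you deduce it from the conclusion of Theorem~\ref{MergeTheorem} that $\cC$ is a genuine cycle decomposition (so each $\iota(C_i)$ is $2$-regular and cannot absorb all four $\Lambda$-edges at $\{v,v'\}$); both are valid, and yours is a clean shortcut given that Theorem~\ref{MergeTheorem} is already in hand.
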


\begin{proof}
Assume the notation and terminology of Construction~\ref{cons:merge} and Construction~\ref{cons:split}. Vertices of $\Gamma'$ are of the form $(\{v,v'\},C)$ for some $v\in\V(\Gamma)$ and $C\in\cP$ such that one of $v$ or $v'$ lies on $C$. 

We first show that exactly one of $v$ or $v'$ lies on $C$. Assume otherwise and recall that the edges of $C$ are in $\cR$ and hence the edge $\{v,v'\}$ is a chord of $C$. As noted in Theorem~\ref{MergeTheorem}, $G$ acts transitively on $\vec{\cR}$. In particular, there exists $g\in G$ which acts as a one-step rotation of the cycle $C$. It follows that $v$ is adjacent to $v^g$ and $v'$ is adjacent to $(v')^g=(v^g)'$ and hence there are at least two edges between $\{v,v'\}$ and $\{v^g,(v^g)'\}$, contradicting the fact that $(\Gamma,G)$ is not degenerate.

Let $\theta$ be the map from $\V(\Gamma')$ to $\V(\Gamma)$ that maps $(\{v,v'\},C)$ to the one of $v$ or $v'$ that lies on $C$. By the previous paragraph, $\theta$ is well-defined. We show that $\theta$ is a graph isomorphism. Since each vertex of $\Gamma$ lies on exactly one element of $\cP$, $\theta$ is a bijection. It remains to show that $\theta$ is a graph homomorphism from $\Gamma'$ to $\Gamma$. Let $e=\{(\{v_1,v_1'\},C_1),(\{v_2,v_2'\},C_2)$ be an edge of $\Gamma'$. By the definition of adjacency in $\Gamma'$, there are two cases to consider. The first case is when $C_1\neq C_2$ and $\{v_1,v_1'\}=\{v_2,v_2'\}$ . This implies that $\theta(e)=\{v_1,v_1'\}$ and hence $\theta(e)$ is an edge of $\Gamma$. The second case is when $C_1=C_2$ and $\{\{v_1,v_1'\},\{v_2,v_2'\}\}$ is an edge of $C_1$. By definition of adjacency in $\Merge(\Gamma,G)$, this implies that, in $\Gamma$, there is an edge of $C_1$ between the sets $\{v_1,v_1'\}$ and $\{v_2,v_2'\}$ and again in this case it follows that $\theta(e)$ is an edge of $\Gamma$. This concludes the proof that $\theta$ is an isomorphism
\end{proof}

Combining Lemma~\ref{lemma:degenerate}, Theorem~\ref{MergeTheorem} and Theorem~\ref{theo:split} together yields the following result which summarises this section.

\begin{corollary}\label{cor:main}
Let $\Gamma$ be a cubic $G$-vertex-transitive graph of order at most $2n$ such that $G_v$ has exactly two orbits on $\Gamma(v)$. Then either $\Gamma$ is a circular ladder graph or a M\"{o}bius ladder graph, or $\Gamma$ is isomorphic to $\Split(\Lambda,\cC)$, where $\Split(\Lambda,\cC)$ is the output of Construction~\ref{cons:split} applied to some $(\Lambda,\cC)$ where $\Lambda$ is a $G$-arc-transitive tetravalent graph of order at most $n$ and $\cC$ is a $G$-arc-transitive cycle decomposition of $\Lambda$.
\end{corollary}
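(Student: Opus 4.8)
The plan is to assemble Corollary~\ref{cor:main} directly from the three preceding results, treating it essentially as a bookkeeping exercise. First I would set up the dichotomy: given a cubic $G$-vertex-transitive graph $\Gamma$ with $G_v$ having exactly two orbits on $\Gamma(v)$, the pair $(\Gamma,G)$ is locally-$\ZZ_2^{[3]}$ (as noted in the discussion preceding Construction~\ref{cons:merge}), so either $(\Gamma,G)$ is degenerate or it is not. In the degenerate case, Lemma~\ref{lemma:degenerate} immediately gives that $\Gamma$ is a circular ladder graph or a M\"obius ladder graph, which is the first alternative in the statement, so nothing more is needed there.

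In the non-degenerate case, I would apply Construction~\ref{cons:merge} to $(\Gamma,G)$ to obtain $\Lambda=\Merge(\Gamma,G)$ together with the partition $\cC=\cC(\Gamma,G)$. Theorem~\ref{MergeTheorem} tells us that $\Lambda$ is a connected tetravalent $G$-arc-transitive graph whose order is half that of $\Gamma$, and that $\cC$ is a $G$-arc-transitive cycle decomposition of $\Lambda$. Since $\Gamma$ has order at most $2n$, the graph $\Lambda$ has order at most $n$, which matches the order bound demanded in the statement. Then Theorem~\ref{theo:split} says that $\Split(\Lambda,\cC)\cong\Gamma$, which is exactly the second alternative in the statement, with $(\Lambda,\cC)$ playing the required role. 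The one point worth stating explicitly is that the group $G$ appearing in Corollary~\ref{cor:main} is the same throughout: it is the hypothesised vertex-transitive group of $\Gamma$, it acts as automorphisms of $\Lambda$ by Theorem~\ref{MergeTheorem} (indeed faithfully, so $G\leq\Aut(\Lambda)$), and the arc-transitivity of $\Lambda$ and of the decomposition $\cC$ are with respect to this same $G$.

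There is essentially no obstacle here; the proof is a one-paragraph concatenation of cited results. The only mild subtlety is making sure the logical structure is clean: one should note that the two alternatives are not claimed to be exclusive (a ladder graph could conceivably also arise as a $\Split$, and indeed the degenerate/non-degenerate split is what governs which branch of the argument we use, not a property of $\Gamma$ alone). So I would phrase the conclusion as an inclusive ``either \dots or \dots'' and simply verify that in each of the two cases (degenerate, non-degenerate) at least one alternative holds. I would keep the write-up to a few sentences, citing Lemma~\ref{lemma:degenerate} for the degenerate case and the pair Theorem~\ref{MergeTheorem}--Theorem~\ref{theo:split} for the non-degenerate case, and explicitly tracking the order bound $2n \rightsquigarrow n$.
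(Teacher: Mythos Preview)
Your proposal is correct and matches the paper's own approach exactly: the paper states the corollary as an immediate consequence of combining Lemma~\ref{lemma:degenerate}, Theorem~\ref{MergeTheorem} and Theorem~\ref{theo:split}, without giving any further argument. Your write-up, splitting on degenerate versus non-degenerate and tracking the order bound $2n\to n$, is precisely what the paper intends.
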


By Corollary~\ref{cor:main}, to obtain all cubic graphs of order at most $1280$ which admit a locally-$\ZZ_2^{[3]}$ vertex-transitive group of automorphisms, it suffices to determine all the pairs $(\Lambda,\cC)$ where $\Lambda$ is an arc-transitive tetravalent graph of order at most $640$ and $\cC$ is an arc-transitive cycle decomposition of $\Lambda$. Determining the latter is the subject of our next section.

\subsection{Arc-transitive locally imprimitive tetravalent graphs}\label{4valent}

Let $\Lambda$ be an arc-transitive tetravalent graph of order at most 640, let $v$ be a vertex of $\Lambda$ and let $\cC$ be an arc-transitive cycle decomposition of $\Lambda$. By Definition~\ref{def:cycledec}, there exists an arc-transitive group of automorphisms of $\Lambda$ preserving the cycle decomposition $\cC$. Let $G$ be such a group. By~\cite[Theorem 4.2]{CycleDec}, $G_v^{\Lambda(v)}$ is permutation isomorphic to $\ZZ_2\times\ZZ_2$, $\ZZ_4$ or $\D_4$ (the dihedral group of degree $4$).  Observe that this implies that $G_v$ is a $2$-group. 

Suppose first that $G_v^{\Lambda(v)}$ is permutation isomorphic to $\ZZ_2\times\ZZ_2$. Using the connectivity of $\Lambda$, it is easy to see that $G_v$ acts faithfully and regularly on $\Lambda(v)$, and thus $G_v\cong \ZZ_2\times \ZZ_2$. Let $x$ and $y$ be generators of $G_v$. Since $\Lambda$ is $G$-arc-transitive, $G$ is generated by $G_v$ and any element (say $a$) interchanging $v$ and a neighbour of $v$ (say $u$), that is, $G=\langle x,y,a\rangle$. Since $a^2$ fixes the arc $(u,v)$, we obtain $a^2=1$. It follows that $G$ is generated by three involutions, two of which commute. 
In particular, $\Lambda$ is isomorphic to the coset graph $\Cos(G,\langle x,y\rangle, a)$;
that is, the graph with vertex-set being the set of cosets $G/\langle x, y\rangle$ and edges of the form $\{\langle x, y\rangle g, \langle x, y\rangle a g\}$ for $g\in G$. 

Conversely, if a group $G$ is generated by a triple $(x,y,a)$ satisfying $x^2=y^2=a^2=[x,y]=1$ such that the coset graph $\Lambda=\Cos(G,\langle x,y\rangle, a)$ is tetravalent, then $(\Lambda,G)$ is locally-$(\ZZ_2\times\ZZ_2)$. By~\cite[Theorem 4.2(iii)]{CycleDec}, there are exactly three arc-transitive cycle decompositions of $\Lambda$ preserved by $G$. It can be shown that they can be obtained in the following way. Choose $g$ to be one of $ax$, $ay$ or $axy$ (each choice will give rise to one of the three arc-transitive cycle decomposition) and let $C$ be the cycle induced by the $\langle g\rangle$-orbit containing the vertex $\langle x,y\rangle$ of $\Lambda$. Then the $G$-orbit of $C$ is an arc-transitive cycle decomposition preserved by $G$. 

To obtain  all  locally-$(\ZZ_2\times \ZZ_2)$ pairs $(\Lambda,G)$ with $|\V(\Lambda)|\le 640$ and their arc-transitive cycle decompositions, it thus suffices to determine quadruples $(G,x,y,a)$ such that $G$ is a group of order at most $4\cdot 640$ generated by $\{x,y,a\}$ subject to $x^2=y^2=a^2=[x,y]=1$ and such that the coset graph $\Cos(G,\langle x,y\rangle, a)$ is tetravalent. Such a quadruple is also known as a {\em regular map} and they have been determined up to order $4\cdot 640$. (We thank Marston Conder for providing the list~\cite{conderKlein} of all such quadruples $(G,x,y,z)$.)

We may now assume that  $G_v^{\Lambda(v)}$ is permutation isomorphic to $\ZZ_4$ or $\D_4$. By~\cite[Theorem 4.2(ii)]{CycleDec}, $\cC$ is the unique arc-transitive cycle decomposition of $\Lambda$ preserved by $G$. We now describe how to construct it.

\begin{construction}\label{cons:cycledec}
The input of this construction is a locally-$L$ pair $(\Lambda,G)$ 
with $L\cong \ZZ_4$ or $\D_4$.
The output is the unique arc-transitive cycle decomposition of $\Lambda$ preserved by $G$. 
Observe that the permutation groups $\ZZ_4$ and $\D_4$ each have a unique non-trivial system of imprimitivity. In particular, for every vertex $v$ of $\Lambda$, there is a unique $G_v$-invariant partition of the set of four edges incident with $v$ into two blocks of size 2. For every pair of edges $e_1$ and $e_2$ of $\Lambda$, we will write $e_1R e_2$  if they form one such block for some vertex $v$. Since every edge is $R$-related to exactly two edges, it follows that the equivalence classes of the transitive closure of $R$ are cycles and in fact form an arc-transitive cycle decomposition of $\Lambda$ preserved by $G$. \EOC
\end{construction}

In view of Construction~\ref{cons:cycledec}, we are now left with the task of finding all tetravalent arc-transitive locally-$\ZZ_4$ and locally-$\D_4$ pairs $(\Lambda,G)$ with $\Lambda$ of order at most $640$. This can be done as follows.

If $|G_v|\geq 64$, then, since $|\V(\Lambda)| \le 640$, it follows by~\cite[Theorem 2]{PSV4valent} that $\Lambda$ is isomorphic to  one of $\Gamma_5^+$, $\Gamma_5^-$  or $\C(2,r,s)$ for some $r$ and $s$. (The graphs $\Gamma_5^+$ and $\Gamma_5^-$ are defined in~\cite{PSVGamma}, the graphs $\C(2,r,s)$ in~\cite{PXu}.) The automorphism group of the graphs $\Gamma_5^+$, $\Gamma_5^-$ and $\C(2,r,s)$ are known (see~\cite[Theorem 4.2(v)]{PSVGamma} and~\cite[Theorem 2.13]{PXu}). It can be seen that since $|G_v| \ge 64$, the pair $(\Lambda,\Aut(\Lambda))$ is locally-$\D_4$ and, by~\cite[Theorem 4.2(ii)]{CycleDec}, there is a unique arc-transitive cycle decomposition of $\Lambda$ preserved by $\Aut(\Lambda)$, say $\cC'$. Clearly, $\cC'$ is preserved by $G$ and hence $\cC'=\cC$.  The latter can be obtained by applying Construction~\ref{cons:cycledec} to $(\Lambda,\Aut(\Lambda))$.

We may thus assume that $|G_v|\leq 32$. In~\cite{Djokovic}, it is shown that for some triple $(\tilde{G}, \tilde{L}, y)$
in a row of Table~\ref{amalgams}, there exists an epimorphism $\pi\colon\tilde{G}\to G$, such that $G_v=\pi(\tilde{L})$, 
$\tilde{L}\cap \mathrm{Ker}(\pi)=1$ and $\Gamma=\Cos(G,G_v,\pi(y))$. Since $\Gamma$ has order at most $640$, $G$ is a quotient of $\tilde{G}$ by a normal subgroup of index at most $640\cdot|\tilde{L}|$.

\begin{table}[hhh]
\begin{center}
\begin{small}
\begin{tabular}{|c|c|c|}
\hline
\phantom{$\overline{\overline{G_j^G}}$}
$\tilde{G}$ & $\tilde{L}$ &$|\tilde{L}|$ \\
\hline\hline
\begin{tabular}{l}
\phantom{$\overline{\overline{G_j^G}}$}
$\la a,y \mid a^4, y^2 \ra$ \\
\end{tabular} &
$\la a \ra$ &
$4$ \\
\hline
\begin{tabular}{l}
\phantom{$\overline{\overline{G_j^G}}$}
$\la a,b,x,y \mid a^2,b^2,[a,b],x^2,a^xb,y^2,[y,b] \ra$ \\
\end{tabular} &
$\la a,b,x \ra$ &
$8$ \\
\hline
\begin{tabular}{l}
\phantom{$\overline{\overline{G_j^G}}$}
$\la a,b,x,y \mid a^2,b^2,[a,b],x^2,a^xb,y^2b,[y,b] \ra$ \\
\end{tabular} &
$\la a,b,x \ra$ &
$8$ \\
\hline
\begin{tabular}{l}
\phantom{$\overline{\overline{G_j^G}}$}
$\la a,b,c,x,y \mid a^2,b^2,c^2,[a,b],[a,c],[b,c],x^2,a^xc,[x,b], c^xa, y^2, b^yc \ra$ \\
\end{tabular} &
$\la a,b,c,x \ra$ &
$16$ \\
\hline
\begin{tabular}{l}
\phantom{$\overline{\overline{G_j^G}}$}
$\la a,b,c,x,y \mid a^2,b^2,c^2,[a,b],[a,c],[b,c],x^2b,a^xc,[x,b], c^xa, y^2, b^yc \ra$ \\
\end{tabular} &
$\la a,b,c,x \ra$ &
$16$ \\
\hline
\begin{tabular}{l}
\phantom{$\overline{\overline{G_j^G}}$}
$\la a,b,c,x,y \mid a^2,b^2,c^2,[a,b],[a,c]b,[b,c],x^2,a^xc,[x,b], c^xa, y^2, b^yc \ra$ \\
\end{tabular} &
$\la a,b,c,x \ra$ &
$16$ \\
\hline
\begin{tabular}{l}
\phantom{$\overline{\overline{G_j^G}}$}
$\la a,b,c,x,y \mid a^2,b^2,c^2,[a,b],[a,c]b,[b,c],x^2b,a^xc,[x,b], c^xa, y^2, b^yc \ra$ \\
\end{tabular} &
$\la a,b,c,x \ra$ &
$16$ \\
\hline
\begin{tabular}{ll}
\phantom{$\overline{\overline{G_j^G}}$}
$\la a,b,c,d,x,y \mid$ & \hskip-2mm $a^2,b^2,c^2,d^2,[a,b],[a,c],[b,c],[b,d],[c,d], [a,d],$ \\
                                &  \hskip-2mm $x^2,a^xd,b^xc, y^2, b^yd,[c,y],d^yb \ra$ \\
\end{tabular} &
$\la a,b,c,d,x \ra$ &
$32$ \\
\hline
\begin{tabular}{ll}
\phantom{$\overline{\overline{G_j^G}}$}
$\la a,b,c,d,x,y \mid$ & \hskip-2mm $a^2,b^2,c^2,d^2,[a,b],[a,c],[b,c],[b,d],[c,d], [a,d] ,$ \\
                                &  \hskip-2mm $x^2,a^xd,b^xc, y^2c, b^yd,[c,y],d^yb \ra$ \\
\end{tabular} &
$\la a,b,c,d,x \ra$ &
$32$ \\
\hline
\begin{tabular}{ll}
\phantom{$\overline{\overline{G_j^G}}$}
$\la a,b,c,d,x,y \mid$ & \hskip-2mm $a^2,b^2,c^2,d^2,[a,b],[a,c],[b,c],[b,d],[c,d], [a,d]bc,$ \\
                                &  \hskip-2mm $x^2,a^xd,b^xc, y^2, b^yd,[c,y],d^yb \ra$ \\
\end{tabular} &
$\la a,b,c,d,x \ra$ &
$32$ \\
\hline
\begin{tabular}{ll}
\phantom{$\overline{\overline{G_j^G}}$}
$\la a,b,c,d,x,y \mid$ & \hskip-2mm $a^2,b^2,c^2,d^2,[a,b],[a,c],[b,c],[b,d],[c,d], [a,d]bc,$ \\
                                &  \hskip-2mm $x^2,a^xd,b^xc, y^2c, b^yd,[c,y],d^yb \ra$ \\
\end{tabular} &
$\la a,b,c,d,x \ra$ &
$32$ \\
\hline                  
                
\end{tabular}
\end{small}
\caption{Universal groups for locally-$\ZZ_4$ and locally-$\D_4$ graphs}     
\label{amalgams}
\end{center}
\end{table}

To summarise, to find all pairs $(\Gamma,G)$ which are locally-$\ZZ_4$ or locally-$\D_4$ and with $|\V(\Gamma)|\leq 640$ and $|G_v|\leq 32$, it suffices to find all normal subgroups $N$ of the groups $\tilde{G}$ (appearing in Table~\ref{amalgams}) of index at most $640\cdot|\tilde{L}|$.

This task can be achieved by applying the \texttt{LowIndexNormalSubgroups} routine in \texttt{Magma}, which is based on an algorithm of Firth and Holt~\cite{Firth} and which computes all normal subgroups of a finitely presented group up to a given index. However, the current implementation of this algorithm applied to this problem in a straightforward way exceeded the memory capacity of a computer with $24$ GB of RAM. To circumvent the problem, we used the following elementary lemma.

\begin{lemma}\label{MaxOrder}
Let $p$ be a prime and let $G$ be a permutation group of degree $n$ such that each point-stabiliser in $G$ is a $p$-group. Then every element of $G$ has order at most $n$. 
\end{lemma}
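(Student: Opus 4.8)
The plan is to bound the order of an arbitrary element $g\in G$ by analysing the permutation it induces on the $n$ points. Let $g\in G$ have order $m$ and decompose the action of $\langle g\rangle$ on $\{1,\dots,n\}$ into orbits; the orbit lengths are the cycle lengths $\ell_1,\dots,\ell_k$ of $g$, and $m=\mathrm{lcm}(\ell_1,\dots,\ell_k)$. First I would observe that it suffices to show each $\ell_i$ is a power of $p$: then $m$, being the lcm of $p$-powers, is itself a power of $p$, and since $\sum_i \ell_i = n$ we have $m\le \max_i \ell_i\le n$.

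To see that each cycle length is a power of $p$, fix a point $\alpha$ lying in a cycle of length $\ell$. Then $g^{\ell}$ fixes $\alpha$, so $g^{\ell}\in G_\alpha$, and by hypothesis $G_\alpha$ is a $p$-group; hence the order of $g^{\ell}$ is a power of $p$, say $p^{t}$. On the other hand the order of $g$ restricted to the orbit of $\alpha$ is exactly $\ell$ (it is a single $\ell$-cycle there), so the order of $g^{\ell}$ on that orbit is $1$; but the order of $g^{\ell}$ as an element of $G$ is $p^{t}$, and its order on the $\alpha$-orbit divides this. More usefully, write $m = \ell\cdot \mathrm{ord}(g^\ell)$ is not quite right, so instead I would argue directly: let $\ell = p^{a} u$ with $\gcd(u,p)=1$. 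Then $g^{p^{a}}$ has, on the $\alpha$-orbit, order $u$ (a divisor of $\ell$ coprime to $p$), so if $u>1$ the element $g^{p^{a}}$ has order divisible by $u$, a number coprime to $p$ and larger than $1$; but raising $g^{p^a}$ to a suitable power that kills all the $p$-parts of the other cycle lengths yields a nontrivial element whose order is coprime to $p$ and which fixes — hmm, this needs care.

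Let me restructure the key step more cleanly. Suppose for contradiction that $m$ is not a power of $p$; then some prime $q\ne p$ divides $m$, hence $q\mid \ell_i$ for some $i$. Pick $\alpha$ in a cycle of length $\ell_i$ and set $h = g^{\,\ell_i/q}$. Then $h$ acts on the $\alpha$-orbit as a product of $\ell_i/q$ disjoint $q$-cycles, in particular $h$ does not fix $\alpha$ but $h^{q}$ does. Consider $h^{q}\in G_\alpha$: it is a $p$-element. Now the order of $h$ equals $q\cdot(\text{order of }h^q\text{ on the complement of the }\alpha\text{-orbit})\cdot(\ldots)$ — still messy. The genuinely clean route: since $G_\alpha$ is a $p$-group, the orbit of $\alpha$ under $\langle g\rangle$ has length equal to the index $[\langle g\rangle : \langle g\rangle\cap G_\alpha]$, and $\langle g\rangle\cap G_\alpha$ is a subgroup of the cyclic group $\langle g\rangle$ that is also a $p$-group, hence has order a power of $p$; therefore $\ell_i = [\langle g\rangle:\langle g\rangle\cap G_\alpha] = m/p^{\,c}$ for some $c\ge 0$. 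This shows $\ell_i = m/p^c$, so $m = \ell_i\cdot p^c$. Since $\ell_i\le n$ and also $\ell_i = m/p^c$, we do not immediately get $m\le n$; but applying this to \emph{every} orbit, each $\ell_i$ has the form $m/p^{c_i}$, so in particular $\ell_{\max} = m/p^{c_{\min}}$ with $c_{\min}$ minimal. If $c_{\min}=0$ then $\ell_{\max}=m\le n$ and we are done. If $c_{\min}\ge 1$ then $p\mid \ell_i$ for all $i$, so $p\mid n=\sum\ell_i$, and moreover $g^{m/p}$ fixes no point (it is nontrivial on every orbit, since $m/p$ is not a multiple of any $\ell_i = m/p^{c_i}$ when $c_i\ge 1$... indeed $\ell_i\mid m/p$ iff $c_i\le 1$ — wait, need $c_i = 1$); hmm, this still requires the case $c_{\min}=1$, i.e.\ some orbit has $p\,\|\,\ell_i$, and then on that orbit $g^{m/p}$ is trivial while on orbits with $c_j\ge 2$ it is not.

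\textbf{The main obstacle.} The conceptual content is the single line ``$\langle g\rangle\cap G_\alpha$ is a $p$-group, hence each cycle length $\ell_i$ divides $m$ with $p$-power cofactor'' — equivalently, the $p'$-part of $m$ divides every $\ell_i$, hence divides $n = \sum\ell_i$, hence the $p'$-part of $m$ is at most $n$; combined with some bookkeeping on the $p$-part this gives $m\le n$. I expect the fiddly part to be packaging this so that the $p$-part of $m$ is also controlled: the clean finish is to note that the $p'$-part $m'$ of $m$ divides $\ell_i$ for every $i$, so $m' \le \ell_i \le n$, and separately the element $g^{m'}$ is a $p$-element lying in $G_\alpha$ for \emph{every} $\alpha$ (since $\ell_i\mid$ nothing forces this — rather, one picks a point in a shortest orbit), so that $g^{m'}$ has order equal to the $p$-part of $m$ and lies in a single point-stabiliser, hence has order at most... this is where the degree-$n$ bound must re-enter. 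I would therefore present the proof as: (1) reduce to bounding the $p$-part and $p'$-part of $m=\mathrm{ord}(g)$ separately; (2) show the $p'$-part divides every cycle length, hence is $\le n$; (3) show, by choosing $\alpha$ in a cycle whose length has the smallest $p$-adic valuation, that the $p$-part of $g^{m'}$ restricted to that cycle equals the full $p$-part of $m$, so this $p$-part is a cycle length dividing $n$; (4) conclude $m = (p\text{-part})\cdot(p'\text{-part})\le$ a product both of whose factors are cycle-length-like quantities — and here the sharp bound $m\le n$ follows because in fact $m$ itself is a single cycle length (the one at the chosen $\alpha$, after the reduction), so $m\le n$ directly.
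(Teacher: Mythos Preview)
Your approach is the paper's approach, but you have buried the one-line finish under several false starts. Two concrete points.

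First, your opening reduction (``it suffices to show each $\ell_i$ is a power of $p$'') is simply false: take $G=S_3$ in its natural action on three points with $p=2$; point-stabilisers have order $2$, yet a $3$-cycle has a single orbit of length $3$. So that branch should be discarded entirely, not patched.

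Second, you do reach the correct key observation --- for any point $\alpha$ the group $\langle g\rangle\cap G_\alpha$ is a $p$-subgroup of the cyclic group $\langle g\rangle$, hence every orbit length has the form $\ell_i = |g|/p^{c_i}$ --- but you never combine it with the fact you already stated in your first paragraph, namely $|g|=\mathrm{lcm}(\ell_1,\dots,\ell_k)$. Doing so ends the proof immediately: the lcm of the numbers $|g|/p^{c_i}$ is $|g|/p^{\min_i c_i}$, so $\min_i c_i = 0$, i.e.\ some orbit has length exactly $|g|$, whence $|g|\le n$. This is precisely the paper's argument (phrased there as: writing $|g|=mp^a$ with $\gcd(m,p)=1$, each orbit length is $mp^b$ with $b\le a$, and since the lcm of the orbit lengths is $|g|$, the longest orbit has length $|g|$). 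All of your case analysis on $c_{\min}\ge 1$, the attempts with $g^{m/p}$ and $g^{m'}$, and the separate bounding of the $p$-part and $p'$-part are unnecessary once you use the lcm identity you already wrote down.
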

\begin{proof}
Let $g$ be an element of $G$ and write $|g|=mp^a$ with $m$ coprime to $p$. Let $s$ be the length of an orbit of $\la g\ra$.
Since $g^s$ fixes at least one point, its order is a power of $p$ and hence $m$ divides $s$. In particular, $s=mp^b$ for some $b\leq a$. Since $|g|$ is the least common multiple of the lengths of all the orbits of $\la g\ra$, we conclude that $|g|$ is equal to the length of the longest orbit of $\la g\ra$ and therefore at most $n$.
\end{proof}

Lemma~\ref{MaxOrder} allowed us to split the computation according to the order of the element $\pi(xy)\in G$. Since $\Lambda$ has order at most $640$ and $G_v$ is a $2$-group, it follows from Lemma~\ref{MaxOrder} that $\pi(xy)$ has order at most $640$ and, in particular, $(\pi(xy))^m=1$ for some $m\leq 640$. It follows that each homomorphic image of $\tilde{G}$ of order at most $640$ is in fact a homomorphic image of the group obtained by adding the relation $(xy)^m$ to $\tilde{G}$ for some $m\leq 640$. It turns out that homomorphic images of these groups can be easily found using \texttt{LowIndexNormalSubgroups}.

This completes the description of the method we used to describe all the pairs $(\Lambda,\cC)$ where $\Lambda$ is an arc-transitive tetravalent graph of order at most $640$ and $\cC$ is an arc-transitive cycle decomposition of $\Lambda$. In particular, we obtain all locally-$L$ pairs $(\Lambda,G)$ where $\Lambda$ is $G$-arc-transitive tetravalent graph of order at most $640$ and $L$ is one of $\ZZ_2\times\ZZ_2$, $\ZZ_4$ or $\D_4$. By combining this with the census of $2$-arc-transitive tetravalent graphs of small order~\cite{Potocnik}, we obtained a list of all tetravalent arc-transitive graphs of order at most $640$ which can be found online~\cite{Census4}.


\thebibliography{99}
\bibitem{Babai} L.~Babai, Isomorphism problem for a class of point-symmetric structures, Acta Math. Acad. Sci. Hungar. \textbf{29} (1977), 329--336.

\bibitem{BabaiGodsil} L.~Babai and C.~D.~Godsil, On the automorphism groups of almost all Cayley graphs, European J. Combin. \textbf{3} (1982), 9--15.

\bibitem{magma} W.~Bosma, J.~Cannon and C.~Playoust, The \texttt{Magma} algebra system. I: The user language, J. Symbolic Comput. \textbf{24} (1997), 235--265. 

\bibitem{FosterBouwer} I.~Z.~Bouwer, W.~W.~Chernoff, B.~Monson and Z.~Star, The Foster census. Charles Babbage Research Centre, 1988. 

\bibitem{CSS} P.~Cameron, J.~Sheehan and P.~Spiga, Semiregular automorphisms of vertex-transitive cubic graphs, European J. Combin. \textbf{27} (2006), 924--930.

\bibitem{conderKlein} M.~Conder, personal communication.

\bibitem{Conder2048} M.~Conder, Trivalent (cubic) symmetric graphs on up to 2048 vertices, \url{http://www.math.auckland.ac.nz/~conder/symmcubic2048list.txt}

\bibitem{ConderFosterCensus} M.~Conder and P.~Dobcs\'{a}nyi, Trivalent symmetric graphs on up to 768 vertices, J. Combin. Math. Combin. Comput. \textbf{40} (2002), 41--63.  

\bibitem{ConderLorimer} M.~Conder and P.~Lorimer, Automorphism groups of symmetric graphs of valency 3, J. Combin. Theory Ser. B \textbf{47} (1989), 60--72.  

\bibitem{Zero} H.~S.~M.~Coxeter, R.~Frucht and D.~L.~Powers, Zero-symmetric graphs, Academic Press, New York, 1981.

\bibitem{Djokovic} D.~\v{Z}.~Djokovi\'{c}, A class of finite group-amalgams, Proc. American Math. Soc. \textbf{80} (1980), 22--26. 

\bibitem{DjoMi} D.~\v{Z}.~Djokovi\'{c} and G.~L.~Miller, Regular groups of automorphisms of cubic graphs, J. Combin. Theory Ser. B \textbf{29} (1980), 195–-230.

\bibitem{ELO} B.~Eick, C.~R.~Leedham-Green and E.~A.~O'Brien, Constructing automorphism groups of $p$-groups, Comm. Algebra \textbf{30} (2002), 2271--2295. 

\bibitem{ExooJaj} G.~Exoo and R.~Jajcay, Dynamic Cage Survey, Electron. J. Combin., Dynamic survey: \textbf{DS16} (2008).

\bibitem{Firth} D.~Firth, An algorithm to find normal subgroups of a finitely presented
group, up to a given finite index, Ph.D. thesis, University of Warwick, 2005.

\bibitem{Foster} R.~M.~Foster, Geometrical Circuits of Electrical Networks,  Trans. Amer. Inst. Elec. Engin. \textbf{51} (1932), 309--317.

\bibitem{Glover} H.~Glover and D.~Maru\v{s}i\v{c}, Hamiltonicity of cubic Cayley graphs, J. Eur. Math. Soc. \textbf{9}  (2007), 775--787.

\bibitem{KMZ} K.~Kutnar, D.~Maru\v{s}i\v{c} and C.~Zhang, On cubic non-Cayley vertex-transitive graphs, J. Graph Theory, DOI: 10.1002/jgt.20573.

\bibitem{Li} C.~H.~Li, Semiregular automorphisms of cubic vertex transitive graphs, Proc. Amer. Math. Soc. \textbf{136} (2008), 1905-–1910.

\bibitem{Lorimer} P.~Lorimer, Vertex-transitive graphs of valency 3, European J. Combin. \textbf{4} (1983), 37--44.

\bibitem{LozSiran} E.~Loz and J.~\v{S}ir\'{a}\v{n}, New record graphs in the degree-diameter problem, Australas. J. Combin. \textbf{41} (2008), 63–-80.

\bibitem{MW} X.~Ma and R.~Wang, Trivalent non-symmetric vertex-transitive graphs of order at most $150$, Algebra Colloq., \textbf{15} (2008), 379--390.

\bibitem{McKaynauty} B.~McKay, The Nauty Page, \url{http://cs.anu.edu.au/~bdm/nauty/}

\bibitem{McKayPraeger} B.~McKay and C.~E.~Praeger, Vertex-transitive graphs which are not Cayley graphs. I,  J. Austral. Math. Soc. Ser. A \textbf{56}  (1994), 53--63.

\bibitem{McKayRoyle} B.~McKay and G.~Royle, Cubic Transitive Graphs, \url{http://mapleta.maths.uwa.edu.au/~gordon/remote/cubtrans/index.html}

\bibitem{CycleDec} \v{S}.~Miklavi\v{c}, P.~Poto\v{c}nik and S.~Wilson, Arc-transitive cycle decompositions of tetravalent graphs, J. Combin. Theory Ser. B \textbf{98} (2008), 1181--1192.

\bibitem{MillSiran} M.~Miller and J.~\v{S}ir\'{a}n, Moore graphs and beyond: A survey of the degree/diameter problem, Electron. J. Combin., Dynamic survey: \textbf{DS14} (2005).

\bibitem{Neumann} P.~M.~Neumann, An enumeration theorem for finite groups, Quart. J. Math. Oxford Ser. (2) \textbf{20} (1969), 395--401.

\bibitem{Potocnik} P.~Poto\v{c}nik, A list of $4$-valent $2$-arc-transitive graphs and finite faithful amalgams of index $(4,2)$, European J. Combin. \textbf{30} (2009), 1323--1336.

\bibitem{PSVGamma} P.~Poto\v{c}nik, P.~Spiga and G.~Verret, Tetravalent arc-transitive graphs with unbounded vertex-stabilisers, Bull. Aust. Math. Soc., DOI: 10.1017/S0004972710002078.

\bibitem{PSV4valent} P.~Poto\v{c}nik, P.~Spiga and G.~Verret, Bounding the order of the vertex-stabiliser in $3$-valent vertex-transitive and $4$-valent arc-transitive graphs, arXiv:1010.2546v1 [math.CO].

\bibitem{Asymptotic} P.~Poto\v{c}nik, P.~Spiga and G.~Verret, On the number of cubic vertex-transitive graphs of order at most $n$, in preparation.

\bibitem{Census3} P.~Poto\v{c}nik, P.~Spiga and G.~Verret, A census of small connected cubic vertex-transitive graphs, \url{http://www.matapp.unimib.it/~spiga/}

\bibitem{Census4} P.~Poto\v{c}nik, P.~Spiga and G.~Verret, A census of small tetravalent arc-transitive graphs, \url{http://www.fmf.uni-lj.si/~potocnik/work_datoteke/Census4val-640.mgm}

\bibitem{PXu} C.~E.~Praeger and M.~Y.~Xu, A characterization of a class of symmetric graphs of twice prime valency, European J. Combin. \textbf{10} (1989), 91--102.

\bibitem{ReadWilson} R.~C.~Read and R.~J.~Wilson, An Atlas of Graphs, Oxford University Press, New York, 1998. 

\bibitem{Sabidussi} G.~Sabidussi, On a class of fixed-point-free graphs, Proc. Amer. Math. Soc. \textbf{9} (1958), 800–-804.

\bibitem{Tutte} W.~T.~Tutte, A family of cubical graphs, Proc. Cambridge Philos. Soc. \textbf{43} (1947), 459--474.

\bibitem{Tutte2} W.~T.~Tutte, On the symmetry of cubic graphs, Canad. J. Math. \textbf{11} (1959), 621--624.

\bibitem{Wang} D.~J.~Wang, The automorphism groups of vertex-transitive cubic graphs, Ph.D. thesis, Peking University, 1997.

\end{document}